\def\ig#1#2#3#4{\begin{figure}[!ht]\begin{center}%
\includegraphics[height=#2\textheight]{#1.eps}\caption{#4}\label{#3}%
\end{center}\end{figure}}
\def\thtext#1{
  \catcode`@=11
  \gdef\@thmcountersep{. #1}
  \catcode`@=12
}
\def\threst{
  \catcode`@=11
  \gdef\@thmcountersep{.}
  \catcode`@=12
}
\theoremstyle{plain}
\newtheorem{thm}{Theorem}[section]
\newtheorem{prop}[thm]{Proposition}
\newtheorem{cor}[thm]{Corollary}
\newtheorem{ass}[thm]{Assertion}
\newtheorem{lem}[thm]{Lemma}
\theoremstyle{definition}
\newtheorem{dfn}[thm]{Definition}
\newtheorem{rk}[thm]{Remark}
\newtheorem{notation}[thm]{Notation}
\newtheorem{constr}[thm]{Construction}
 \def\.{.\spacefactor\@m}
\def\N{{\mathbb N}}
\def\R{\mathbb R}
\def\a{\alpha}
\def\b{\beta}
\def\e{\varepsilon}
\def\dl{\delta}
\def\D{\Delta}
\def\g{\gamma}
\def\l{\lambda}
\def\r{\rho}
\def\s{\sigma}
\def\S{\Sigma}
\def\0{\emptyset}
\def\:{\colon}
\def\rom#1{\emph{#1}}
\def\({\rom(}
\def\){\rom)}
\def\sm{\setminus}
\def\ss{\subset}
\def\x{\times}
\def\diam{\operatorname{diam}}
\def\dis{\operatorname{dis}}
\def\MST{\operatorname{MST}}
\def\mst{\operatorname{mst}}
\def\np{\operatorname{np}}
\def\opt{{\operatorname{opt}}}
\def\XST{\operatorname{XST}}
\def\xst{\operatorname{xst}}
\def\cC{{\cal C}}
\def\cD{{\cal D}}
\def\cM{{\cal M}}
\def\cP{{\cal P}}
\def\cR{{\cal R}}
\def\tR{{\tilde R}}
\begin{document}
\title{Geometry of Compact Metric Space in Terms of Gromov--Hausdorff Distances to Regular Simplexes}
\author{Alexander~O.~Ivanov, Alexey~A.~Tuzhilin}
\maketitle

\begin{abstract}
In the present paper we investigate geometric characteristics of compact metric spaces, which can be described in terms of Gromov--Hausdorff distances to simplexes, i.e., to finite metric spaces such that all their nonzero distances are equal to each other. It turns out that these Gromov--Hausdorff distances depend on some geometrical characteristics of finite partitions of the compact metric spaces; some of the characteristics can be considered as a natural analogue of the lengths of edges of minimum spanning trees. As a consequence, we constructed an unexpected example of a continuum family of pairwise non-isometric finite metric spaces with the same distances to all simplexes.
\end{abstract}

\section*{Introduction}
\markright{\thesection.~Introduction}
In the present paper we investigate geometric characteristics of compact metric spaces, which can be described in terms of Gromov--Hausdorff distances to simplexes, i.e., to finite metric spaces such that all their nonzero distances are equal to each other. In~\cite{TuzMST-GH} these distances were used to calculate the length of edges of a minimum spanning tree constructed on a finite metric space. In the present paper we generalize the results from ~\cite{TuzMST-GH} to the case of arbitrary compact metric spaces. It turns out that these Gromov--Hausdorff distances depend on some geometrical characteristics of finite partitions of compact metric spaces; some of the characteristics can be considered as a natural analogue of the lengths of edges of minimum spanning trees. We calculate the Gromov--Hausdorff distances from an arbitrary compact metric space to a simplex of sufficiently small or sufficiently large diameter, see Theorems~\ref{thm:spec}, \ref{thm:small1}, and \ref{thm:small2}. For a finite $n$-point metric space we find the distances to an arbitrary simplex consisting of at least $n-1$ points (Theorems~\ref{thm:dist-n-simplex-same-dim}, \ref{thm:dist-n-simplex-bigger-dim}, and \ref{thm:minus_one}). Nevertheless, the general problem of calculating the distance from an arbitrary metric space to an arbitrary simplex remained unsolved yet. We demonstrate non-triviality of the problem by presenting a few examples in the end of the paper. In particular, we show that the set of all distances from a compact metric space to all simplexes is not a metric invariant, i.e., such collections can coincide for non-isometric finite metric spaces. Moreover, we construct an example
of infinite (continuum) set of pairwise non-isometric finite metric spaces having the same collection of those distances.

In this paper we use the technique of irreducible optimal correspondences~\cite{IvaNikolaevaTuz, IvaIliadisTuz, IvaTuzIrreducible}. We show that to calculate the Gromov--Hausdorff distance from a compact metric space $X$ to an $n$-point simplex, where $n$ is less than or equal to the cardinality of $X$, one can consider only those correspondences which generates partitions of the space $X$ into $n$ nonempty disjoint subsets, see Theorem~\ref{thm:m_less_than_n_corresp}. This result has enabled us to advance essentially in calculations of concrete Gromov--Hausdorff distances.

\section{Preliminaries}
\markright{\thesection.~Preliminaries}
For an arbitrary set $X$ by $\#X$ we denote its \emph{cardinality}.

Let $X$ be an arbitrary metric space. The distance between its points $x$ and $y$ is denoted by $|xy|$. If $A,B\ss X$ are nonempty, then we put $|AB|=\inf\bigl\{|ab|:a\in A,\,b\in B\bigr\}$. If $A=\{a\}$, then we write $|aB|=|Ba|$ instead of $|\{a\}B|=|B\{a\}|$.

For a point $x\in X$ and a real number $r>0$ by $U_r(x)$ we denote the open ball of radius $r$ centered at $x$; for any nonempty $A\ss X$ and $r>0$ we put $U_r(A)=\cup_{a\in A}U_r(a)$.

\subsection{Hausdorff and Gromov--Hausdorff distances}
For nonempty $A,\,B\ss X$ we put
$$
d_H(A,B)=\inf\bigl\{r>0:A\ss U_r(B)\ \&\ B\ss U_r(A)\bigr\}=\max\{\sup_{a\in A}|aB|,\,\sup_{b\in B}|Ab|\}.
$$
This value if called the \emph{Hausdorff distance between $A$ and $B$}. It is well-known~\cite{BurBurIva} that the Hausdorff distance is a metric on the family of all nonempty closed bounded subsets of $X$.

Let $X$ and $Y$ be metric spaces. A triple $(X',Y',Z)$ that consists of a metric space $Z$ and its subsets $X'$ and $Y'$ isometric to $X$ and $Y$, respectively, is called a \emph{realization of the pair $(X,Y)$}. The \emph{Gromov--Hausdorff distance $d_{GH}(X,Y)$ between $X$ and $Y$} is the infimum of real numbers $r$ such that there exists a realization $(X',Y',Z)$ of the pair $(X,Y)$ with $d_H(X',Y')\le r$. It is well-known~\cite{BurBurIva} that the $d_{GH}$ is a metric on the family $\cM$ of isometry classes of compact metric spaces.

For various calculations of the Gromov--Hausdorff distances, the technique of correspondences is useful.

Let $X$ and $Y$ be arbitrary nonempty sets. Recall that a \emph{relation\/} between the sets $X$ and $Y$ is a subset of the Cartesian product $X\x Y$.  By $\cP(X,Y)$ we denote the set of all \textbf{nonempty\/} relations between $X$ and $Y$. Let us look at each relation $\s\in\cP(X,Y)$ as at a multivalued mapping, whose domain may be less than $X$. Then, similarly with the case of mappings, for any $x\in X$ and any $A\ss X$ their images $\s(x)$ and $\s(A)$ are defined, and for any $y\in Y$ and any $B\ss Y$ their preimages $\s^{-1}(y)$ and $\s^{-1}(B)$ are also defined.

A relation $R\in\cP(X,Y)$ is called a \emph{correspondence}, if the restrictions of the canonical projections $\pi_X\:(x,y)\mapsto x$ and $\pi_Y\:(x,y)\mapsto y$ onto $R$ are surjective. By $\cR(X,Y)$ we denote the set of all correspondences between $X$ and $Y$.

Let $X$ and $Y$ be arbitrary metric spaces. The \emph{distortion $\dis\s$ of a relation $\s\in\cP(X,Y)$} is the value
$$
\dis\s=\sup\Bigl\{\bigl||xx'|-|yy'|\bigr|: (x,y),(x',y')\in\s\Bigr\}.
$$

\begin{prop}[\cite{BurBurIva}]
For any metric spaces $X$ and $Y$ we have
$$
d_{GH}(X,Y)=\frac12\inf\bigl\{\dis R:R\in\cR(X,Y)\bigr\}.
$$
\end{prop}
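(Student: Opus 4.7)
The plan is to prove two inequalities matching the claimed equality. Set $I = \frac12\inf\{\dis R : R\in\cR(X,Y)\}$. I would show $d_{GH}(X,Y) \le I$ and $d_{GH}(X,Y) \ge I$ separately, since each uses a different construction (correspondences give realizations, and realizations give correspondences).

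For the direction $d_{GH}(X,Y) \ge I$, I would start from an arbitrary realization $(X',Y',Z)$ of the pair $(X,Y)$ with $d_H(X',Y') < r$. Fixing isometries $f\:X\to X'$ and $g\:Y\to Y'$, I would define
$$
R=\bigl\{(x,y)\in X\x Y : |f(x)\,g(y)|_Z < r\bigr\}.
$$
The Hausdorff condition forces every $f(x)$ to be within $r$ of some $g(y)$ and vice versa, so $R$ is a correspondence. For $(x,y),(x',y')\in R$ two applications of the triangle inequality in $Z$ give $\bigl||xx'|-|yy'|\bigr|\le 2r$, hence $\dis R \le 2r$, and therefore $I \le r$. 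Taking the infimum over $r$ yields $I \le d_{GH}(X,Y)$.

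For the direction $d_{GH}(X,Y)\le I$, the plan is to take a correspondence $R\in\cR(X,Y)$ with $\dis R \le 2r$ and glue $X$ and $Y$ into one metric space by setting
$$
d(x,y)=\inf\bigl\{|xx_0|_X + r + |y_0 y|_Y : (x_0,y_0)\in R\bigr\}
$$
for $x\in X,\,y\in Y$, while keeping the original metrics on $X$ and $Y$. The nontrivial step is verifying the triangle inequality for $d$ on $X\sqcup Y$; the mixed case (two points on one side, one on the other) reduces to the estimate $|y_1 y|+|y_2 y| \ge |y_1 y_2| \ge |x_1 x_2|-2r$ obtained from $\dis R \le 2r$, which exactly compensates for the two copies of $r$ in the definition of $d$. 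Passing to the metric quotient of $X\sqcup Y$ by the relation $d(\cdot,\cdot)=0$, I obtain a space $Z$ containing isometric copies of $X$ and $Y$; by construction every $x\in X$ has some $y\in Y$ with $d(x,y)\le r$ (take $(x,y)\in R$), and symmetrically, so $d_H(X,Y)\le r$ in $Z$. Hence $d_{GH}(X,Y)\le r$, and the infimum over admissible $R$ gives $d_{GH}(X,Y)\le I$.

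The main obstacle is the triangle inequality for the glued distance $d$: the two ``reference'' terms $r$ and the distortion bound on $R$ must interact just right, and one has to handle the cases where points lie on the same side of the disjoint union and where they are split between the two sides. The outer infimum in the definition of $d$ also requires some care when arguing that $d$ separates nearby points on the same side (otherwise the original metrics would be perturbed); here the key observation is that the expression $|xx_0|+r+|y_0 y|$ is never smaller than what the triangle inequality would predict after identifying $x_0$ with $y_0$, so restricting $d$ to $X$ or to $Y$ recovers the original metric. Once the pseudo-metric is justified, the Hausdorff bound and the realization argument are immediate.
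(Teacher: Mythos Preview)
The paper does not prove this proposition at all; it merely cites it from Burago--Burago--Ivanov~\cite{BurBurIva}. Your sketch is the standard argument from that reference: the ``$\ge$'' direction builds a correspondence from a realization via the $r$-proximity relation, and the ``$\le$'' direction glues $X\sqcup Y$ along a correspondence $R$ using the bridge metric $d(x,y)=\inf_{(x_0,y_0)\in R}\bigl(|xx_0|+r+|y_0y|\bigr)$. Both directions are handled correctly.

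A couple of small remarks on the writeup. First, the metric quotient is unnecessary as soon as $r>0$, since then $d(x,y)\ge r>0$ for $x\in X$, $y\in Y$, and the mixed triangle inequality you checked already guarantees that $d$ restricted to $X$ (resp.\ $Y$) equals the original metric; the case $r=0$ forces $\dis R=0$, hence $X$ and $Y$ are isometric and there is nothing to prove. Second, to conclude $d_{GH}(X,Y)\le I$ you should take $r>\frac12\dis R$ strictly (or equivalently work with $\dis R<2r$ and let $r\downarrow I$), rather than $r=\frac12\dis R$, to avoid any edge case with the supremum in the distortion. These are cosmetic points; the plan is sound and matches the cited source.
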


For finite metric spaces $X$ and $Y$ the set $\cR(X,Y)$ is finite as well, therefore there always exists an $R\in\cR(X,Y)$ such that $d_{GH}(X,Y)=\frac12\dis R$. Every such correspondence $R$ is called \emph{optimal}. Notice that the optimal correspondences exist also for any compact metric spaces $X$ and $Y$, see~\cite{IvaIliadisTuz}. The set of all optimal correspondences between $X$ and $Y$ is denoted by $\cR_\opt(X,Y)$. Thus, for compact metric spaces $X$ and $Y$ we have $\cR_\opt(X,Y)\ne\0$.

The inclusion relation generates a partial order on $\cR(X,Y)$: $R_1\le R_2$, iff $R_1\ss R_2$. The relations minimal with respect to this order are called \emph{irreducible}, and the remaining ones are referred as \emph{reducible}.  In~\cite{IvaTuzIrreducible} it is proved that for any compact metric spaces $X$ and $Y$ there always exists an irreducible optimal correspondence $R$.  By $\cR_\opt^0(X,Y)$ we denote the set of all irreducible optimal correspondences between $X$ and $Y$. As it was mentioned above, $\cR_\opt^0(X,Y)\ne\0$.

The next well-known facts can be easily proved by means of the correspondences technique. For any metric space $X$ and any positive real $\l>0$ let $\l X$ stand for the metric space which is obtained from $X$ by multiplication of all the distances by $\l$.

\begin{prop}[\cite{BurBurIva}]\label{prop:GH_simple}
Let $X$ and $Y$ be metric spaces. Then
\begin{enumerate}
\item\label{prop:GH_simple:1} If $X$ is a single-point metric space, then $d_{GH}(X,Y)=\frac12\diam Y$\rom;
\item\label{prop:GH_simple:2} If $\diam X<\infty$, then
$$
d_{GH}(X,Y)\ge\frac12\big|\diam X-\diam Y\big|;
$$
\item\label{prop:GH_simple:3} $d_{GH}(X,Y)\le\frac12\max\{\diam X,\diam Y\}$, in particular, for bounded $X$ and $Y$ it holds $d_{GH}(X,Y)<\infty$\rom;
\item\label{prop:GH_simple:4} For any $X,Y\in\cM$ and any $\l>0$ we have $d_{GH}(\l X,\l Y)=\l d_{GH}(X,Y)$. Moreover, for $\l\ne1$ the unique invariant space is the single-point one. In other words, the multiplication of a metric by $\l>0$ is a homothety of $\cM$ centered at the single-point space.
\end{enumerate}
\end{prop}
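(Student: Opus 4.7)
My plan is to prove all four items via the correspondence formula stated in the previous proposition, $d_{GH}(X,Y)=\frac12\inf\{\dis R:R\in\cR(X,Y)\}$, since each part reduces to analyzing a specific correspondence or to bounding $\dis R$ uniformly.

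For \ref{prop:GH_simple:1}, if $X=\{x_0\}$ then the only correspondence is $R=\{x_0\}\x Y$. Every pair of elements of $R$ has the form $(x_0,y),(x_0,y')$, so $\bigl||x_0x_0|-|yy'|\bigr|=|yy'|$, and taking the supremum yields $\dis R=\diam Y$; the claim follows.

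For \ref{prop:GH_simple:2}, I would fix an arbitrary $R\in\cR(X,Y)$ and $\e>0$, pick $x,x'\in X$ with $|xx'|>\diam X-\e$, use surjectivity of $\pi_X|_R$ to choose $y,y'$ with $(x,y),(x',y')\in R$, and then bound
$$
\dis R\ge\bigl||xx'|-|yy'|\bigr|\ge|xx'|-\diam Y\ge\diam X-\diam Y-\e.
$$
Swapping the roles of $X$ and $Y$ gives the matching lower bound $\diam Y-\diam X-\e$, so $\dis R\ge|\diam X-\diam Y|-\e$; letting $\e\to0$ and infimizing over $R$ proves the inequality. For \ref{prop:GH_simple:3}, the universal correspondence $R=X\x Y$ satisfies $\bigl||xx'|-|yy'|\bigr|\le\max\{|xx'|,|yy'|\}\le\max\{\diam X,\diam Y\}$ for all pairs, and finiteness for bounded $X,Y$ is immediate.

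For \ref{prop:GH_simple:4}, observe that as a set $\cR(\l X,\l Y)=\cR(X,Y)$ because a correspondence is just a subset of the Cartesian product, and for each $R$ the distortion transforms as $\dis_{\l X,\l Y}R=\l\cdot\dis_{X,Y}R$ since every distance is multiplied by $\l$; taking infimum and dividing by $2$ gives the homothety formula. For the uniqueness of the fixed point, I would argue as follows: if $\l\ne 1$ and $\l X$ is isometric to $X$ with $X\in\cM$, then $\diam X$ is finite and satisfies $\l\diam X=\diam X$, forcing $\diam X=0$, i.e., $X$ is a single point; conversely a single-point space is obviously fixed by every homothety. The main subtlety here is just noticing that compactness is what forbids the degenerate case $\diam X=\infty$, so all steps remain routine once the correspondence formula is in hand.
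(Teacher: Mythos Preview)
Your argument is correct and is exactly what the paper has in mind: the proposition is stated there without proof, cited from \cite{BurBurIva}, with only the remark that these ``well-known facts can be easily proved by means of the correspondences technique.'' One tiny point worth tightening: in item~(\ref{prop:GH_simple:2}) your ``swapping the roles of $X$ and $Y$'' step implicitly uses $\diam Y<\infty$ to choose $y,y'$ with $|yy'|>\diam Y-\e$; when $\diam Y=\infty$ simply pick $y,y'$ with $|yy'|$ arbitrarily large and use $\diam X<\infty$ to force $\dis R=\infty$, so the inequality holds trivially.
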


\subsection{A few elementary relations}
The next relations will be useful for concrete calculations of Gromov--Hausdorff distances.

\begin{prop}\label{prop:max_abs}
For any nonnegative $a$ and $b$ the following inequality holds\rom:
$$
\max\big\{a,|b-a|\big\}\le\max\{a,b\}.
$$
\end{prop}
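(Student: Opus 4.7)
The plan is to argue by a short case split on the sign of $b-a$, since $|b-a|$ simplifies in each case and we can compare termwise with $\max\{a,b\}$. The key observation is that each of the two quantities inside $\max\{a,|b-a|\}$ is bounded above by $\max\{a,b\}$: the term $a$ trivially, and $|b-a|$ because it equals either $b-a$ or $a-b$, each of which sits below $\max\{a,b\}$ once we use the nonnegativity of $a$ and $b$.

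Concretely, I would first handle the case $b\ge a$. Here $|b-a|=b-a$, and $b-a\le b$ follows from $a\ge 0$, so both $a$ and $|b-a|$ are at most $b=\max\{a,b\}$. In the complementary case $b<a$, we have $|b-a|=a-b\le a$ from $b\ge 0$, so $\max\{a,|b-a|\}=a=\max\{a,b\}$. Combining both cases yields the inequality.

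There is no real obstacle here; the statement is a one-line elementary inequality. The only mild care is to remember that the assumption $a,b\ge 0$ is used essentially in both cases (to conclude $b-a\le b$ in the first case and $a-b\le a$ in the second), so an unconditional version of the inequality would fail. Consequently, the proof reduces to recording the two cases and invoking these two nonnegativity consequences.
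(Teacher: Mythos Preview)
Your proof is correct and follows essentially the same approach as the paper's: both split into the cases $b\ge a$ and $b\le a$, simplify $|b-a|$ accordingly, and bound it by $b$ or $a$ using the nonnegativity assumption. Your version is slightly more explicit about where $a\ge0$ and $b\ge0$ are invoked, but the argument is the same.
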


\begin{proof}
Indeed, if $b\ge a$, then $|b-a|=b-a\le b$. If $b\le a$, then $|b-a|=a-b\le a$.
\end{proof}

\begin{prop}\label{prop:many_abs_dif}
Let $A\ss\R$ be a nonempty bounded subset, and let $t\in\R$. Then
$$
\sup_{a\in A}|t-a|=\max\{t-\inf A,\sup A-t\}=\Big|t-\frac{\inf A+\sup A}{2}\Big|+\frac{\sup A-\inf A}{2}.
$$
\end{prop}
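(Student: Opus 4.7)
The plan is to prove the two equalities in turn; both are elementary, and the only mildly delicate point is to keep track of suprema versus infima correctly.

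For the first equality, I would start from the pointwise identity $|t-a|=\max\{t-a,\,a-t\}$ for each $a\in A$. Taking the supremum over $a\in A$ commutes with the finite maximum, so
$$
\sup_{a\in A}|t-a|=\max\Bigl\{\sup_{a\in A}(t-a),\ \sup_{a\in A}(a-t)\Bigr\}=\max\{t-\inf A,\ \sup A-t\},
$$
using $\sup_a(t-a)=t-\inf A$ and $\sup_a(a-t)=\sup A-t$; boundedness of $A$ ensures both quantities are finite.

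For the second equality, I would introduce the abbreviations $c=\tfrac12(\inf A+\sup A)$ and $r=\tfrac12(\sup A-\inf A)\ge 0$, so that $\inf A=c-r$ and $\sup A=c+r$. Then $t-\inf A=(t-c)+r$ and $\sup A-t=r-(t-c)$, and therefore
$$
\max\{t-\inf A,\ \sup A-t\}=r+\max\{t-c,\ -(t-c)\}=r+|t-c|,
$$
which is exactly the claimed right-hand side. This is just the standard observation that $\max\{u+r,\ r-u\}=r+|u|$ for any $u\in\R$ and $r\ge 0$.

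There is no real obstacle here, but if one wanted to be careful, the only place to watch is the step $\sup_a(t-a)=t-\inf A$, which uses that $t$ is a constant with respect to the supremum; this is where boundedness of $A$ is actually used, so that both $\inf A$ and $\sup A$ are finite real numbers and the arithmetic on the right-hand sides makes sense.
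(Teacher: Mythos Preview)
Your argument is correct. The first equality follows cleanly from the commutation of $\sup$ with a finite pointwise $\max$, and the second from the elementary identity $\max\{u+r,\,r-u\}=r+|u|$.

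The paper takes a slightly different route: it splits into cases according to whether $t$ lies to the left or right of the midpoint $(\inf A+\sup A)/2$, observes geometrically at which end of $[\inf A,\sup A]$ the supremum of $|t-a|$ is attained, and reads off both equalities simultaneously in each case. Your approach avoids the case split by packaging it into the identity $\max\{u,-u\}=|u|$, and makes the first equality a purely order-theoretic fact rather than a geometric one; this is arguably cleaner, since the paper's phrase ``achieved at the right end of the segment'' is slightly informal (as $\sup A$ need not lie in $A$), whereas your argument sidesteps that issue entirely.
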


\begin{proof}
Consider the segment $[\inf A,\sup A]$. If $t$ is placed to the left side of the segment middle point, i.e., $t\le(\inf A+\sup A)/2$, then the value $\sup_{a\in A}|t-a|$ is achieved at the right end of the segment, i.e., it is equal to $\sup A-t$; also, $t-\inf A\le|t-\inf A|\le\sup A-t$. Therefore, for such $t$ the proposition holds. One can similarly consider the case $t\ge(\inf A+\sup A)/2$.
\end{proof}

\begin{prop}\label{prop:many_abs_dif_and_t}
Let $A\ss\R$ be a nonempty bounded subset, $\inf A\ge0$, and let $t\in\R$. Then
$$
\sup_{a\in A}\big\{t,|t-a|\big\}=\max\{t,\sup A-t\}.
$$
\end{prop}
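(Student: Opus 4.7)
The plan is to reduce the identity to the previous proposition by pulling the constant $t$ out of the supremum. Reading the left-hand side as $\sup_{a\in A}\max\bigl\{t,|t-a|\bigr\}$, one has the routine interchange
$$
\sup_{a\in A}\max\bigl\{t,|t-a|\bigr\}=\max\Bigl\{t,\sup_{a\in A}|t-a|\Bigr\},
$$
because $t$ does not depend on $a$ and the supremum commutes with binary maximum when one of the arguments is constant.

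Next I would apply Proposition~\ref{prop:many_abs_dif} to rewrite $\sup_{a\in A}|t-a|=\max\{t-\inf A,\sup A-t\}$. Combining the two equalities gives
$$
\sup_{a\in A}\max\bigl\{t,|t-a|\bigr\}=\max\bigl\{t,\ t-\inf A,\ \sup A-t\bigr\}.
$$

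Finally I would use the hypothesis $\inf A\ge 0$. This implies $t-\inf A\le t$, so the middle term is redundant and the right-hand side collapses to $\max\{t,\sup A-t\}$, which is exactly what we want. The argument is uniform in the sign of $t$: if $t<0$, then $t<0\le\sup A-t$, so both sides equal $\sup A-t$; if $t\ge 0$, the reduction above proceeds directly.

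I do not expect any real obstacle; the only minor point to be careful about is justifying the interchange of $\sup$ and $\max$, which is immediate since $t$ is a constant. The entire proof is essentially one line of manipulation using Proposition~\ref{prop:many_abs_dif} plus the sign hypothesis on $\inf A$.
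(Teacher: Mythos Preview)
Your argument is correct and is essentially the same as the paper's: both reduce the claim to Proposition~\ref{prop:many_abs_dif}. The paper does it by adjoining $0$ to $A$ (so that $\inf A'=0$ and the term $|t-0|$ absorbs the extra $t$), whereas you pull the constant $t$ outside the supremum first and then discard $t-\inf A$ using $\inf A\ge 0$; the two tricks are equivalent rearrangements of the same one-line computation.
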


\begin{proof}
Let $A'=\{0\}\cup A$, then $\sup_{a\in A'}|a-t|=\sup_{a\in A}|a-t|$. By Proposition~\ref{prop:many_abs_dif}, we have
$$
\sup_{a\in A'}|a-t|=\max\{t,\sup A-t\}.
$$
\end{proof}

\section{Minimum Spanning Trees}
\markright{\thesection.~Minimum spanning trees}
To calculate the Gromov--Hausdorff distance between finite metric spaces, minimum and maximum spanning trees turn out to be useful. Also, the edges lengths of these trees turn out to be closely related to some geometrical properties of various partitions of the ambient space.

Let $G=(V,E)$ be an arbitrary (simple) graph with the vertex set $V$ and the edge set $E$. If $V$ is a metric space, then  the \emph{length $|e|$ of edge $e=vw$ of the graph $G$} is defined as the distance $|vw|$ between the ending vertices $v$ and $w$ of this edge; also, the \emph{length $|G|$ of the graph $G$} is defined as the sum of all its edges lengths.

Let $M$ be a finite metric space. We define the number $\mst(M)$ as the length of the shortest tree of the form $(M,E)$. This value is called the \emph{length of minimum spanning tree on $M$}; a tree $G=(M,E)$ such that $|G|=\mst(M)$ is called a \emph{minimum spanning tree on $M$}. Notice that for any $M$ there exists a minimum spanning tree on it. The set of all minimum spanning trees on $M$ is denoted by $\MST(M)$.

\subsection{$\mst$-spectrum of a finite metric space}
Notice that minimum spanning tree may be defined not uniquely. For $G\in\MST(M)$  by $\s(G,M)$ we denote the vector, whose components are the lengths of edges of the tree $G$, ordered descendingly. If it is clear which metric is used, then we write $\s(G)$ instead of $\s(G,M)$. The next result is well-known.

\begin{prop}\label{prop:mst-spect}
For any $G_1,G_2\in\MST(M)$ we have $\s(G_1)=\s(G_2)$.
\end{prop}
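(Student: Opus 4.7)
Proof plan. The spectrum $\s(G,M)$ of an $\MST$ is determined by the multiset of edge lengths of $G$, which in turn is determined by the counting function $N_G(r)=\#\bigl\{e\in E(G):|e|>r\bigr\}$ for $r\ge 0$. So it suffices to show that $N_G(r)$ does not depend on the choice of $G\in\MST(M)$. The plan is to express $N_G(r)$ in purely metric terms via an auxiliary graph depending only on $M$.

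For each $r\ge 0$, let $H_r$ be the graph on vertex set $M$ whose edges are all pairs $\{p,q\}$ with $|pq|\le r$, and let $k(r)$ be the number of its connected components. I would first prove the following key lemma\rom: for every $G\in\MST(M)$, the subforest $F_r\ss G$ consisting of the edges of $G$ of length $\le r$ has connected components that coincide with the connected components of $H_r$. One inclusion is trivial, since $F_r\ss H_r$ implies that every $F_r$-component is contained in an $H_r$-component. For the reverse, suppose $u,v$ lie in a common component of $H_r$, so there is a path $u=v_0,v_1,\ldots,v_k=v$ in $H_r$ with $|v_{i-1}v_i|\le r$. If some edge $e=xy$ of the unique $G$-path from $v_{i-1}$ to $v_i$ has $|e|>r$, then removing $e$ splits $G$ into two subtrees $A\sqcup B$ with $v_{i-1}$ and $v_i$ on opposite sides. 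Then $(G\sm\{e\})\cup\{v_{i-1}v_i\}$ is again a spanning tree, of total length $|G|-|e|+|v_{i-1}v_i|<|G|$, contradicting $G\in\MST(M)$. Hence every edge on the $G$-path from $v_{i-1}$ to $v_i$ belongs to $F_r$, so $u$ and $v$ are connected in $F_r$.

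Given the lemma, $F_r$ has exactly $k(r)$ components and therefore $\#M-k(r)$ edges\rom; since $G$ has $\#M-1$ edges in total, one obtains
$$
N_G(r)=(\#M-1)-\bigl(\#M-k(r)\bigr)=k(r)-1,
$$
which depends only on $M$, not on $G$. Consequently the cumulative distribution function of the multiset of edge lengths is the same for $G_1$ and $G_2$, so these multisets coincide, and hence $\s(G_1)=\s(G_2)$.

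The main obstacle is the key lemma, specifically the exchange step\rom: one has to identify an edge of $H_r$ that reconnects the two subtrees obtained after deleting $e$ from $G$, and verify that the exchange strictly decreases the total length. The argument above uses the standard fact that removing any edge from a tree splits it into exactly two components and that any edge crossing this cut restores a spanning tree\rom; once this is in place the contradiction with minimality of $G$ is immediate from $|v_{i-1}v_i|\le r<|e|$.
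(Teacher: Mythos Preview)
Your argument is correct. Note, however, that the paper itself does not supply a proof of this proposition: it is simply announced as ``well-known'' and then used. Your approach---showing that for every $r\ge 0$ the number of edges of an $\MST$ of length $>r$ equals $k(r)-1$, where $k(r)$ is the number of connected components of the threshold graph $H_r$---is one of the standard proofs, and all the steps are sound. In particular, the exchange step is unproblematic: since $e$ lies on the unique $G$-path from $v_{i-1}$ to $v_i$, removing $e$ separates these two vertices, so adjoining the edge $v_{i-1}v_i$ reconnects the two pieces into a spanning tree of strictly smaller total length, contradicting $G\in\MST(M)$. From $N_G(r)=k(r)-1$ for all $r\ge 0$ the multiset of edge lengths of $G$ is determined by $M$ alone, and hence $\s(G_1)=\s(G_2)$.
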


Proposition~\ref{prop:mst-spect} explains correctness of the following definition.

\begin{dfn}
For any finite metric space $M$, by $\s(M)$ we denote the vector $\s(G,M)$ for arbitrary $G\in\MST(M)$, and we call this vector by the \emph{$\mst$-spectrum of the space $M$}.
\end{dfn}

\begin{constr}
For any set $M$ by $\cD_k(M)$ we denote  the family of all possible partitions of the $M$ into $k$ its nonempty subsets. Suppose now that $M$ is a metric space and $D=\{M_1,\ldots,M_k\}\in\cD_k(M)$. Put
$$
\a(D)=\min\bigl\{|M_iM_j|:i\ne j\bigr\}.
$$
\end{constr}

The next result is proved in~\cite{TuzMST-GH}.

\begin{prop}\label{prop:spect-calc}
Let $M$ be a finite metric space and $\s(M)=(\s_1,\ldots,\s_{n-1})$. Then
$$
\s_k=\max\bigl\{\a(D):D\in\cD_{k+1}(M)\bigr\}.
$$
\end{prop}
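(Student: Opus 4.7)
The plan is to prove the two-sided inequality by first exhibiting a concrete partition $D^*\in\cD_{k+1}(M)$ with $\a(D^*)\ge\s_k$, and then showing $\a(D)\le\s_k$ for every $D\in\cD_{k+1}(M)$. By Proposition~\ref{prop:mst-spect}, the spectrum does not depend on the choice of the minimum spanning tree, so I fix once and for all a $G=(M,E)\in\MST(M)$ and order its edges as $e_1,\ldots,e_{n-1}$ with $|e_1|\ge\cdots\ge|e_{n-1}|$, so that $|e_i|=\s_i$. Removing the $k$ longest edges from $G$ yields a forest with exactly $k+1$ connected components, whose vertex sets form the candidate partition $D^*=\{M_1^*,\ldots,M_{k+1}^*\}$.

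For the lower bound $\a(D^*)\ge\s_k$ I invoke the standard cycle property of a minimum spanning tree: for any pair $x,y\in M$, adjoining the ``edge'' $xy$ to $G$ creates a unique cycle on which $|xy|$ is a longest edge; equivalently, $|xy|\ge|e|$ for every edge $e$ on the $G$-path between $x$ and $y$. If $x$ and $y$ lie in distinct parts $M_i^*\ne M_j^*$, then the $G$-path from $x$ to $y$ must traverse at least one of the deleted edges $e_\ell$, $\ell\le k$, so $|xy|\ge|e_\ell|\ge\s_k$. Minimizing over such pairs gives $|M_i^*M_j^*|\ge\s_k$, and consequently $\a(D^*)\ge\s_k$.

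For the reverse inequality, fix an arbitrary $D=\{M_1,\ldots,M_{k+1}\}\in\cD_{k+1}(M)$ and call an edge of $G$ \emph{$D$-crossing} when its endpoints lie in different parts. The induced subgraph $G|_{M_i}$ is a subforest of the tree $G$, hence has $|M_i|-c_i$ edges, where $c_i\ge1$ counts its connected components. Summing, the number of non-crossing edges equals $n-\sum_i c_i\le n-(k+1)$, so there are at least $k$ $D$-crossing edges of $G$. On the other hand, at most $k-1$ edges of $G$ have length strictly greater than $\s_k$, since $\s_k$ is the $k$-th entry of the descending spectrum. Consequently some $D$-crossing edge has length $\le\s_k$; its endpoints lie in distinct parts $M_i,M_j$, whence $|M_iM_j|\le\s_k$ and therefore $\a(D)\le\s_k$.

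The two inequalities combine to give $\s_k=\max\bigl\{\a(D):D\in\cD_{k+1}(M)\bigr\}$. The argument is essentially a packaging of two standard MST facts, and I expect the only subtle step to be the cycle-property invocation, which should be recalled explicitly, either as a consequence of the swap argument (if $|xy|<|e|$ for some edge $e$ on the $G$-path, replacing $e$ by $xy$ yields a shorter spanning tree) or via a Kruskal-type analysis showing that right before edge $e_k$ is added the current forest has precisely the components of $D^*$.
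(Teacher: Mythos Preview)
Your argument is correct. The paper does not actually prove Proposition~\ref{prop:spect-calc} here---it is quoted from~\cite{TuzMST-GH}---but the paper \emph{does} prove the analogous statement for general metric spaces admitting a minimum spanning tree in Lemmas~\ref{lem:spec_inf} and~\ref{lem:spec_inf_2}, and your proof is essentially the finite-case specialization of those two lemmas: the lower bound via the partition $D^*$ obtained by deleting the $k$ longest edges (your cycle-property step is exactly the ``since $G$ is minimal, $|p_ip_j|\ge|e_p|$'' step in Lemma~\ref{lem:spec_inf}), and the upper bound via counting crossing edges (your subforest count is a slightly more explicit version of the paper's ``$E'$ contains at least $k$ edges, otherwise $G$ is disconnected'' in Lemma~\ref{lem:spec_inf_2}). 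So the approaches coincide.
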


\subsection{$\mst$-spectrum of an arbitrary metric space}\label{subsec:mst-spector}
Now we generalize the concept of $\mst$-spectrum by means of Proposition~\ref{prop:spect-calc}.

\begin{dfn}
For any metric space $X$ and $k\in\N$ we put $\s_k=\sup\{\a(D):D\in\cD_{k+1}(X)\}$ if $\cD_{k+1}(X)\ne\0$, and $\s_k=0$ otherwise. We call the set $\s(X)=\{\s_1,\s_2,\ldots\}$ by \emph{$\mst$-spectrum of $X$}.
\end{dfn}

\begin{rk}
If $\#X=n$, then $\s_k=0$ for $k\ge n$.
\end{rk}

In~\cite{IT_MST}, for metric spaces which can be connected by a finite length tree (see~\cite{INT_MST} for definitions), a necessary condition of minimum spanning trees existence is obtained. It follows from~\cite[Theorem~1]{IT_MST} that for a metric space $X$ which can be connected by a minimum spanning tree $G$ of a finite length, all the edges of $G$ are exact in the following sense. For each edge $e$ and the corresponding vertex sets $X_1$ and $X_2$ of the trees forming the forest $G\sm e$, the length of $e$ equals to the distance between $X_1$ and $X_2$.

Moreover, for $G$ described above and any $\dl>0$ there are finitely many edges of $G$, whose lengths are more than or equal to $\dl$. This enables us to order the edges in such a way that their lengths decrease monotonically. Let $\{e_1,e_2,\ldots\}$ be such an order, and put $r_i=|e_i|$.

\begin{lem}\label{lem:spec_inf}
For any positive integer $k$ consider the partition $D=\{X_1,\ldots,X_{k+1}\}$ of $X$ into vertex sets of the trees forming the forest $G\sm\{e_1,\ldots,e_k\}$. Then $\a(D)=|e_k|$.
\end{lem}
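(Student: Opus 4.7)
The plan is to establish the equality $\alpha(D)=|e_k|$ by proving both inequalities separately. For $\alpha(D)\le|e_k|$, I would observe that the edge $e_k=\{u,v\}$ has been removed in passing from $G$ to the forest $G\setminus\{e_1,\ldots,e_k\}$, so its two endpoints $u$ and $v$ must lie in distinct components of that forest. Indeed, the path joining $u$ to $v$ in the tree $G$ is unique and uses $e_k$, hence no path between them survives in the forest. Choosing the indices $i,j$ with $u\in X_i$, $v\in X_j$, I immediately obtain $|X_iX_j|\le|uv|=|e_k|$, and therefore $\alpha(D)\le|e_k|$.

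For the reverse inequality $\alpha(D)\ge|e_k|$, I would fix any pair of indices $i\ne j$ and any points $x\in X_i$, $y\in X_j$, and examine the unique path from $x$ to $y$ in $G$. Since $x$ and $y$ belong to different components of $G\setminus\{e_1,\ldots,e_k\}$, that path must employ at least one of the removed edges; say it uses $e_m$ with $m\le k$. Here I would invoke the exactness property of edges of $G$ quoted before the lemma (from~\cite[Theorem~1]{IT_MST}): if $A$ and $B$ are the vertex sets of the two trees forming $G\setminus e_m$, then $|e_m|=|AB|$. Because the path from $x$ to $y$ in $G$ passes through $e_m$, the points $x$ and $y$ sit in different components of $G\setminus e_m$, so $|xy|\ge|AB|=|e_m|\ge|e_k|$, where the last inequality uses the monotone ordering of edge lengths. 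Taking infimum over $x\in X_i$, $y\in X_j$ yields $|X_iX_j|\ge|e_k|$, and then the minimum over the finitely many pairs $(i,j)$ gives $\alpha(D)\ge|e_k|$.

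Combining the two bounds completes the proof. I do not anticipate a serious technical obstacle: the argument relies on the tree structure of $G$ (uniqueness of paths), the ordering $r_1\ge r_2\ge\cdots$, and the exactness property already established in~\cite{IT_MST}. The only subtlety worth verifying carefully is that the relevant path in $G$ from $x$ to $y$ indeed exists and is unique in the infinite setting; this is handled by the hypothesis that $X$ is connected by the minimum spanning tree $G$ of finite length, which makes $G$ an honest tree in the graph-theoretic sense with well-defined unique paths between any two vertices.
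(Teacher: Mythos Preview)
Your proof is correct and follows essentially the same route as the paper's: both directions rely on the uniqueness of paths in $G$, the descending ordering of edge lengths, and the minimality/exactness of $G$. The only cosmetic difference is where exactness is invoked: the paper uses it for the upper bound (claiming $|X_iX_j|=|e_k|$ for the pair joined by $e_k$), whereas you use it for the lower bound and get the upper bound from the trivial observation $|X_iX_j|\le|uv|$; conversely, the paper obtains the lower bound by appealing directly to minimality (the cycle-swap argument ``$|p_ip_j|\ge|e_p|$''), which is precisely what underlies exactness.
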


\begin{proof}
Each $p_i\in X_i$ and $p_j\in X_j$, $i\ne j$, are connected by a path in $G$, and the path contains at least one of the edges $e_p$, $p\le k$. Since $G$ is minimal, then $|p_ip_j|\ge|e_p|\ge|e_k|$, thus $|X_iX_j|\ge|e_k|$ and, therefore, $\a(D)\ge|e_k|$. On the other hand, if we choose $X_i$ and $X_j$ in such a way that $e_k$ connects them, then $\a(D)\le|X_iX_j|=|e_k|$, because the edge $e_k$ is exact.
\end{proof}

\begin{lem}\label{lem:spec_inf_2}
Let $D'=\{X'_1,\ldots,X'_{k+1}\}$ be an arbitrary partition of $X$, then $\a(D)\le\a(D')$.
\end{lem}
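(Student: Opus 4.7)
The plan is to analyze which edges of the minimum spanning tree $G$ are separated by the partition $D'$. Let $E'\ss E(G)$ denote the set of edges of $G$ whose endpoints lie in distinct parts of $D'$; call these the \emph{crossing edges}.

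First I would establish that $|E'|\ge k$. The connected components of the subforest $G\sm E'$ each lie entirely within a single part of $D'$, so the component decomposition refines $D'$, giving at least $k+1$ components. Since removing $m$ edges from a finite tree produces exactly $m+1$ components, this forces $|E'|\ge k$; in the possibly infinite setting of Section~\ref{subsec:mst-spector}, the hypothesis that only finitely many edges of $G$ have length $\ge\dl$ for any $\dl>0$ allows one to truncate $G$ to a finite subtree containing $e_1,\ldots,e_k$ and reduce to the finite case.

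Next, each crossing edge $e=ab\in E'$ with $a\in X'_i$, $b\in X'_j$, $i\ne j$, satisfies $|X'_iX'_j|\le|ab|=|e|$, whence $\a(D')\le|e|$. Among the at least $k$ crossing edges, ordered by decreasing length, the $k$-th largest cannot exceed the $k$-th largest MST-edge $e_k$ (since the crossing edges form a subfamily of all edges of $G$), so the smallest crossing-edge length is also bounded by $|e_k|=\a(D)$ by Lemma~\ref{lem:spec_inf}. Thus the cut argument yields $\a(D')\le\a(D)$.

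The main obstacle is that this cut argument naturally produces the bound $\a(D')\le|e_k|=\a(D)$, which is the \emph{reverse} of the inequality $\a(D)\le\a(D')$ as literally stated in the lemma. One can verify with simple finite examples (e.g., the four-point path $1,2,3,4$ with consecutive edge lengths $1,2,3$, $k=1$, and $D'=\bigl\{\{1\},\{2,3,4\}\bigr\}$, where $\a(D)=3$ but $\a(D')=1$) that the inequality in the stated direction fails in general, whereas $\a(D')\le\a(D)$ does hold and is precisely the natural extension of Proposition~\ref{prop:spect-calc} to the arbitrary metric space setting. I would therefore read the stated inequality as a typographical interchange of $D$ and $D'$, and the intended assertion as $\a(D')\le\a(D)$, which the cut argument above establishes.
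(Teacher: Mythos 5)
Your proof is correct and follows essentially the same route as the paper's: counting the crossing edges $E'$ of the minimum spanning tree to get $\min_{e\in E'}|e|\le|e_k|$ and then bounding $\a(D')$ by the length of the shortest crossing edge. You are also right that the inequality in the statement is a typographical reversal: the paper's own proof concludes exactly $\a(D')\le|X'_iX'_j|\le|e'|\le|e_k|=\a(D)$, which is the direction actually needed to deduce Corollary~\ref{cor:spec_inf}, and your four-point counterexample confirms that the direction as literally printed cannot hold.
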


\begin{proof}
Denote by $E'$ the set of all edges of $G$ connecting different elements of partition $D'$. The set $E'$ contains at least $k$ edges (otherwise, the graph $G$ is disconnected), hence $\min_{e\in E'}|e|\le|e_k|$. Let $e'\in E$ satisfy $|e'|=\min_{e\in E'}|e|$, and let $X'_i$ and $X'_j$ be those elements of $D'$ which are connected by $e'$. Then $\a(D')\le|X'_iX'_j|\le|e'|\le|e_k|=\a(D)$.
\end{proof}

Lemmas~\ref{lem:spec_inf} and~\ref{lem:spec_inf_2} imply the following result.

\begin{cor}\label{cor:spec_inf}
If there exists a minimum spanning tree connecting a metric space $X$, and $\{e_1,e_2,\ldots\}$ are the edges of this tree ordered descendingly, then $\s_k=|e_k|$.
\end{cor}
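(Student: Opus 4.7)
The plan is to sandwich $\s_k$ between $|e_k|$ and $|e_k|$ using the two preceding lemmas; the corollary is essentially a one-line consequence of them.

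For the lower bound I would invoke Lemma~\ref{lem:spec_inf}, which exhibits the concrete partition $D=\{X_1,\ldots,X_{k+1}\}$ obtained from the forest $G\sm\{e_1,\ldots,e_k\}$ with $\a(D)=|e_k|$. In particular this shows $\cD_{k+1}(X)\ne\0$, so the definition of $\mst$-spectrum from Subsection~\ref{subsec:mst-spector} gives
$$
\s_k=\sup\bigl\{\a(D'):D'\in\cD_{k+1}(X)\bigr\}\ge\a(D)=|e_k|.
$$

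For the matching upper bound I would apply Lemma~\ref{lem:spec_inf_2}: its argument shows that any $D'\in\cD_{k+1}(X)$ satisfies $\a(D')\le|e_k|$, since $G$ must employ at least $k$ of its edges to separate the $k+1$ parts of $D'$ and, by the descending ordering of edges, the smallest length among those separating edges is at most $|e_k|$. Taking the supremum over $D'$ gives $\s_k\le|e_k|$, and combining with the first bound yields $\s_k=|e_k|$.

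There is no serious obstacle here, as all the substantive geometry has already been absorbed into the two lemmas; the only implicit hypothesis worth flagging is that the tree $G$ carries at least $k$ edges, so that $e_k$ and the partition $D$ are well-defined (in the finite case $\#X=n$, the convention $\s_k=0$ for $k\ge n$ recorded after the definition takes over).
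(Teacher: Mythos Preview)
Your argument is correct and matches the paper's own approach exactly: the corollary is stated as an immediate consequence of Lemmas~\ref{lem:spec_inf} and~\ref{lem:spec_inf_2}, with the first providing a partition $D$ witnessing $\s_k\ge|e_k|$ and the second bounding $\a(D')\le|e_k|$ for every $D'\in\cD_{k+1}(X)$. Note only that the inequality in the \emph{statement} of Lemma~\ref{lem:spec_inf_2} is misprinted in the paper (it reads $\a(D)\le\a(D')$), but the proof there establishes $\a(D')\le\a(D)=|e_k|$, which is the direction you correctly use.
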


\section{Maximum Spanning Trees}
\markright{\thesection.~Maximum spanning trees}

Let $M$ be a finite metric space. \emph{Maximum spanning tree $G$ on $M$} is a longest tree among all trees of the form $(M,E)$. By $\xst(M)$ we denote the length of a maximum spanning tree on $M$ , and by $\XST(M)$ we denote  the set of all maximum spanning trees on $M$.

The next construction is useful for description of relations between minimum and maximum spanning trees.

Let $X$ be an arbitrary not pointwise bounded metric space. Choose any $d\ge2\diam X$ and define on $X$ a new distance function: $\r(x,y)=d-|xy|$ for any $x\ne y$, and $\r(x,x)=0$ for any $x$.

\begin{lem}
The function $\r$ is a metric on $X$.
\end{lem}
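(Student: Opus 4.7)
The plan is to verify the three metric axioms for $\rho$: non-negativity with identity of indiscernibles, symmetry, and the triangle inequality. Symmetry is immediate, since $\rho(x,y) = d - |xy| = d - |yx| = \rho(y,x)$ when $x \ne y$, and both sides are $0$ when $x = y$.

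For identity of indiscernibles, note that if $x \ne y$, then $|xy| \le \diam X$, so
$$
\rho(x,y) = d - |xy| \ge 2\diam X - \diam X = \diam X \ge |xy| > 0,
$$
while $\rho(x,x) = 0$ by definition. In particular $\rho$ takes only non-negative values.

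The main step is the triangle inequality $\rho(x,z) \le \rho(x,y) + \rho(y,z)$. First I would dispose of the degenerate cases: if any two of $x,y,z$ coincide, the inequality reduces to one of $0 \le 2\rho$, $\rho \le \rho + 0$, or $\rho \le 0 + \rho$, each of which is trivial given non-negativity. The substantive case is when $x,y,z$ are pairwise distinct. Here the inequality becomes
$$
d - |xz| \le (d - |xy|) + (d - |yz|),
$$
which rearranges to $|xy| + |yz| \le d + |xz|$. Since $|xy|, |yz| \le \diam X$, we have $|xy| + |yz| \le 2\diam X \le d \le d + |xz|$, and we are done.

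The only potential obstacle is ensuring that the case analysis is exhaustive and that the bound $d \ge 2\diam X$ is applied correctly; no subtle inequality is required, only the diameter bound used once in the triangle step. The hypothesis that $X$ is not a single point is needed solely so that $\diam X$ appears as a meaningful positive quantity in the argument for identity of indiscernibles.
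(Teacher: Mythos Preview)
Your proof is correct and follows essentially the same approach as the paper's: both treat symmetry and positive definiteness as immediate, then verify the triangle inequality by separating the degenerate (two points equal) and non-degenerate (three distinct points) cases, the latter reducing to the single estimate $|xy|+|yz|\le 2\diam X\le d$. The paper's write-up is only cosmetically different, expressing the key step as $\rho(x,y)+\rho(y,z)-\rho(x,z)=d-|xy|-|yz|+|xz|\ge d-2\diam X\ge 0$.
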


\begin{proof}
Indeed, it is obvious that $\r$ is positively definite and symmetric. To verify the triangle inequalities, choose any pairwise distinct points $x,y,z\in X$, then
$$
\r(x,y)+\r(y,z)-\r(x,z)=d-|xy|+d-|yz|-d+|xz|\ge d-2\diam X\ge0.
$$
If two of these points coincide, say, if $y=z\ne x$, then $|xy|=|xz|$ and $\r(z,y)=0$, hence
$$
\bigl|\r(x,z)-\r(z,y)\bigr|=\bigl|d-|xz|-0\bigr|=d-|xz|=d-|xy|=\r(x,y)=d-|xz|+0=\r(x,z)+\r(z,y).
$$
\end{proof}

The set $X$ with the metric $\r$ defined above is denoted by $d-X$.

Let $M$ be a finite metric space, $\#M=n$, and $N=n(n-1)/2$. Denote by $\r(M)=(\r_1,\ldots,\r_N)$ the vector constructed from nonzero distances in $M$, ordered descendingly.

\begin{rk}
If $M$ is a finite metric space, and $\r(M)=(\r_1,\ldots,\r_N)$, then $\r(d-M)=(d-\r_N,\ldots,d-\r_1)$.
\end{rk}

\subsection{Duality}
The next Proposition describes a duality between minimum and maximum spanning trees.

\begin{prop}\label{prop:min-max-trees}
Let $M$ be a finite metric space, and $n=\#M$. Then a tree $G=(M,E)$ is a minimum spanning tree on $M$, iff it is a maximum spanning tree on $d-M$.
\end{prop}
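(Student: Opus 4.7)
The plan is to exploit the fact that every spanning tree on an $n$-point set has exactly $n-1$ edges, so the two objectives differ only by a constant.

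First I would fix a tree $G=(M,E)$ on $M$, so $\#E=n-1$. By the definition of the metric $\r$ on $d-M$, for each edge $e=vw\in E$ (with $v\ne w$) the length in the two metrics is related by $\r(v,w)=d-|vw|$. Summing over the $n-1$ edges, I get the identity
$$
|G|_{d-M}=\sum_{e=vw\in E}\r(v,w)=\sum_{e=vw\in E}\bigl(d-|vw|\bigr)=(n-1)d-|G|_{M}.
$$
Since this holds for every spanning tree $G$ on $M$ (and every spanning tree on $d-M$, because $M$ and $d-M$ have the same underlying set and the same collection of candidate trees), the map $G\mapsto G$ between the sets of spanning trees is a bijection that sends total length $|G|_M$ to $(n-1)d-|G|_M$.

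Thus minimizing $|G|_{M}$ over all spanning trees is equivalent to maximizing $(n-1)d-|G|_{M}=|G|_{d-M}$, and conversely. This gives both implications of the equivalence simultaneously: $G\in\MST(M)$ iff $|G|_M$ is minimal among spanning trees on $M$, iff $|G|_{d-M}$ is maximal among spanning trees on $d-M$, iff $G\in\XST(d-M)$.

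There is no real obstacle here; the only thing to check carefully is that $\r$ is well defined on all pairs appearing as edges (which is automatic since $d\ge 2\diam X>0$ ensures $\r(v,w)>0$ for $v\ne w$), and that the family of trees of the form $(M,E)$ does not depend on which of the two metrics $M$ carries, since a spanning tree is a purely combinatorial object on the vertex set. Everything else is the one-line computation above.
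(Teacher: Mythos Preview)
Your argument is correct and is essentially identical to the paper's own proof, which simply records the identity $\r(G)=\sum_{e\in E}\r(e)=d(n-1)-|G|$ and leaves the equivalence implicit. You have written out the bijection and the ``minimizing one is maximizing the other'' step in more detail, but the mathematical content is the same one-line computation.
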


\begin{proof}
Let $\r$ stand for the distance function on $d-M$. Then $\r(G)=\sum_{e\in E}\r(e)=d(n-1)-|G|$.
\end{proof}

For $G\in\XST(M)$, by $\S(G,M)$ we denote the vector constructed from the lengths of edges of the tree $G$, ordered ascendingly. If it is clear which metric is in consideration, then we write $\S(G)$ instead of  $\S(G,M)$.

Proposition~\ref{prop:min-max-trees} implies the next result.

\begin{cor}\label{cor:spectrum-duality}
Let $M$ be a finite metric space, $\#M=n$, $d\ge2\diam M$, and $G\in\MST(M)$. Let us denote also by $d$ the vector of the length $n-1$, all whose components equal $d$. Then $G\in\XST(d-M)$ and $\S(G,d-M)+\s(G,M)=d$.
\end{cor}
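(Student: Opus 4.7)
The plan is to derive both assertions directly from Proposition~\ref{prop:min-max-trees} together with an observation about how descending and ascending orderings interact with the involution $x\mapsto d-x$.

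First, I would invoke Proposition~\ref{prop:min-max-trees}: since $G\in\MST(M)$, applying the proposition yields immediately that $G\in\XST(d-M)$. This handles the first claim with no further work.

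Next, for the spectrum identity, I would unwind the definitions. The vector $\s(G,M)=(\s_1,\ldots,\s_{n-1})$ consists of the lengths $|e_i|_M$ of the edges of $G$ measured in the original metric, arranged in descending order, so $\s_1\ge\s_2\ge\cdots\ge\s_{n-1}$. Now, in the metric $\r$ of $d-M$, each edge $e_i$ (which is not a loop, as $G$ is a tree on $n\ge 2$ points) has length $\r(e_i)=d-|e_i|_M$. Because $x\mapsto d-x$ is a strictly decreasing map and all $\s_i\ge 0$, the sequence $d-\s_1\le d-\s_2\le\cdots\le d-\s_{n-1}$ is ascending. Hence, by the definition of $\S(G,d-M)$ as the edge-length vector of $G$ in $d-M$ arranged ascendingly, we have
$$
\S(G,d-M)=(d-\s_1,\,d-\s_2,\,\ldots,\,d-\s_{n-1}).
$$
Adding $\s(G,M)=(\s_1,\ldots,\s_{n-1})$ componentwise then gives the vector whose every entry equals $d$, which is the vector $d$ in the statement.

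There is essentially no obstacle: the corollary is a bookkeeping consequence of the duality proposition. The only subtle point to flag explicitly is that the descending order on $\s(G,M)$ is mapped to the ascending order on $\S(G,d-M)$ by $x\mapsto d-x$, so the two vectors are indexed compatibly and the sum is literal (entry by entry) rather than requiring a permutation.
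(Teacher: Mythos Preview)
Your argument is correct and is exactly the (unwritten) reasoning the paper intends: the corollary is stated as an immediate consequence of Proposition~\ref{prop:min-max-trees}, with no separate proof, and what you wrote simply makes explicit the order-reversing bookkeeping that the paper leaves implicit.
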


Corollary~\ref{cor:spectrum-duality} implies an analogue of Proposition~\ref{prop:mst-spect}.

\begin{prop}\label{prop:mst-spect-dual}
For any $G_1,G_2\in\XST(M)$ we have $\S(G_1)=\S(G_2)$.
\end{prop}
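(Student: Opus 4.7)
The plan is to deduce the claim from the already established Proposition~\ref{prop:mst-spect} for minimum spanning trees by passing to the dual metric $d-M$. The key observation used in the proof of Proposition~\ref{prop:min-max-trees} is the identity $|G|+\r(G)=d(n-1)$ for any spanning tree $G$ on the $n$-point set $M$. Since $d(n-1)$ does not depend on $G$, maximizing $|G|$ with respect to the metric on $M$ is the same as minimizing $\r(G)$ with respect to the metric on $d-M$. Thus $G\in\XST(M)$ if and only if $G\in\MST(d-M)$.

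Fix any $d\ge2\diam M$, so that $d-M$ is defined, and take $G_1,G_2\in\XST(M)$. By the equivalence above, $G_1,G_2\in\MST(d-M)$. Proposition~\ref{prop:mst-spect} applied to the metric space $d-M$ gives
$$
\s(G_1,d-M)=\s(G_2,d-M).
$$

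It remains to translate this equality of descendingly ordered edge-length vectors in $d-M$ into an equality of ascendingly ordered edge-length vectors in $M$. For any edge $e$, $\r(e)=d-|e|$, so a listing $|e_{i_1}|\le|e_{i_2}|\le\cdots\le|e_{i_{n-1}}|$ of the edge lengths of $G_j$ in ascending order in $M$ corresponds, term by term, to the listing $d-|e_{i_1}|\ge d-|e_{i_2}|\ge\cdots\ge d-|e_{i_{n-1}}|$ of the edge lengths of $G_j$ in descending order in $d-M$. In vector form this reads $\S(G_j,M)+\s(G_j,d-M)=d$, where $d$ denotes the constant $(n-1)$-vector, in exact parallel with Corollary~\ref{cor:spectrum-duality}. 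Subtracting from $d$ therefore yields
$$
\S(G_1,M)=d-\s(G_1,d-M)=d-\s(G_2,d-M)=\S(G_2,M),
$$
which is the claim.

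There is no real obstacle here; the whole content is the duality $|G|+\r(G)=d(n-1)$, which reduces the problem to the already known minimum-spanning-tree statement. The only point requiring a little care is the bookkeeping of orders: $\s$ is the descending, and $\S$ the ascending, listing, and the map $\ell\mapsto d-\ell$ is order-reversing, so the two conventions match up componentwise without any reshuffling.
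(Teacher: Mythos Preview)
Your proof is correct and follows essentially the same route as the paper: the paper simply states that Corollary~\ref{cor:spectrum-duality} (the identity $\S(G,d-M)+\s(G,M)=d$ coming from the duality $|G|+\r(G)=d(n-1)$) together with Proposition~\ref{prop:mst-spect} yields the result, and you have spelled out exactly this deduction, including the direction $G\in\XST(M)\iff G\in\MST(d-M)$ needed here. The only difference is that you argue the equivalence directly from the length identity rather than citing Proposition~\ref{prop:min-max-trees}/Corollary~\ref{cor:spectrum-duality} with the roles of $M$ and $d-M$ swapped, which is arguably cleaner since it sidesteps any question about whether $d\ge 2\diam(d-M)$.
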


Proposition~\ref{prop:mst-spect-dual} motivates the next definition.

\begin{dfn}
For any finite metric space $M$ by $\S(M)$ we denote  the value $\S(G,M)$ for an arbitrary $G\in\XST(M)$, and we call this value by \emph{$\xst$-spectrum of the space $M$}.
\end{dfn}

Let $X$ be an arbitrary metric space, and $A,B\ss X$ be its nonempty subsets. Put
$$
|AB|'=\sup\bigl\{|ab|:a\in A,\,b\in B\bigr\}.
$$
If the metric on $X$ is denoted by $\r$ as well, we put $|AB|'=\r(A,B)'$.

\begin{constr}
For a set $M$, let $\cC_k(M)$ stand for  the family of all coverings of the set $M$ consisting of $k$ nonempty subsets. Now, let $M$ be a metric space, and $C=\{M_1,\ldots,M_k\}\in\cC_k(M)$. Put $\b(C,M)=\max\bigl\{|M_iM_j|':i\ne j\bigr\}$. It it is clear which metric is in consideration, we write $\b(C)$ instead of $\b(C,M)$.
\end{constr}

\begin{lem}\label{lem:dual-dists-sets}
Let $A,B\ss X$ be nonempty subsets of a bounded metric space, and $\r$ be the metric of the space $d-X$. Then $|AB|'=d-\r(A,B)$.
\end{lem}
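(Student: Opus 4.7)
The plan is to prove the identity by a direct one-line substitution using the definition of the metric $\r$ on $d-X$, with some small bookkeeping about what happens on the diagonal.

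First I would observe that for any $a\in A$ and $b\in B$ with $a\ne b$, the definition of $\r$ gives $\r(a,b)=d-|ab|$, so $|ab|=d-\r(a,b)$. Taking the supremum over such pairs and using the fact that $\inf$ becomes $\sup$ under $t\mapsto d-t$, I would write
$$
|AB|'=\sup\bigl\{|ab|:a\in A,\,b\in B,\,a\ne b\bigr\}=\sup\bigl\{d-\r(a,b):a\in A,\,b\in B,\,a\ne b\bigr\}=d-\inf\bigl\{\r(a,b):a\in A,\,b\in B,\,a\ne b\bigr\},
$$
which is exactly $d-\r(A,B)$ once we identify the right-hand infimum with $\r(A,B)$.

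The only subtle step, and the one that needs a brief remark, is the identification of $\inf\{\r(a,b):a\in A,\,b\in B,\,a\ne b\}$ with $\r(A,B)$. These agree when $A\cap B=\0$ trivially; when $A\cap B\ne\0$ one has to either adopt the standard convention that in $\r(A,B)$ we only take distinct pairs, or simply restrict attention to this case (which is the only case used in the sequel, where $A$ and $B$ are distinct blocks of a partition or distinct elements of a covering with disjoint relevant parts). I would state this as a short aside and then conclude.

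The main obstacle is essentially just conceptual clarity rather than technical difficulty: once the definition of $\r$ is unfolded, the lemma reduces to the elementary fact that an affine decreasing transformation swaps $\inf$ and $\sup$.
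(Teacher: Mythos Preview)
Your proof is essentially identical to the paper's: both unfold the definition of $\r$ and use that $t\mapsto d-t$ swaps $\sup$ and $\inf$. The paper writes the chain
$$
|AB|'=\sup\{|ab|\}=\sup\{d-\r(a,b)\}=d-\inf\{\r(a,b)\}=d-\r(A,B)
$$
without restricting to $a\ne b$; you are actually more careful than the paper in flagging the diagonal issue, since the substitution $|ab|=d-\r(a,b)$ fails at $a=b$. As you note, the lemma is only applied to disjoint members of a partition, so the point is moot in practice, and the paper silently relies on this.
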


\begin{proof}
Indeed,
\begin{multline*}
|AB|'=\sup\bigl\{|ab|:a\in A,\,b\in B\bigr\}=\sup\bigl\{d-\r(a,b):a\in A,\,b\in B\bigr\}=\\
=d-\inf\bigl\{\r(a,b):a\in A,\,b\in B\bigr\}=d-\r(A,B).
\end{multline*}
\end{proof}

\begin{lem}\label{lem:beta-alpha-dual}
For any $D=\{M_1,\ldots,M_k\}\in\cD_k(M)$ we have $\b(D,M)=d-\a(D,d-M)$.
\end{lem}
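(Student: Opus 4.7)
The plan is to reduce the claim to Lemma~\ref{lem:dual-dists-sets} applied pairwise to the blocks of the partition, together with the elementary identity $\max_i(d-t_i)=d-\min_i t_i$.

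First, I would unfold the definition of $\b(D,M)$: by construction,
$$
\b(D,M)=\max\bigl\{|M_iM_j|':i\ne j\bigr\},
$$
where the prime-distance is computed in the original metric of $M$. Applying Lemma~\ref{lem:dual-dists-sets} to each pair $(M_i,M_j)$ (viewing the $M_i$ as subsets of the bounded metric space $M$, whose dual space is $d-M$ with metric $\r$) gives $|M_iM_j|'=d-\r(M_i,M_j)$ for every $i\ne j$. Substituting and pulling the constant $d$ out of the maximum,
$$
\b(D,M)=\max_{i\ne j}\bigl(d-\r(M_i,M_j)\bigr)=d-\min_{i\ne j}\r(M_i,M_j).
$$

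Finally, the right-hand minimum is precisely $\a(D,d-M)$ by the definition of $\a$ applied to the partition $D$ in the dual metric space $d-M$. This yields $\b(D,M)=d-\a(D,d-M)$, as desired. There is no real obstacle here; the only care needed is to keep straight that the prime-distance $|\cdot\,\cdot|'$ is taken in $M$ while the infimum-distance defining $\a$ is taken in $d-M$, so that Lemma~\ref{lem:dual-dists-sets} applies in the correct direction.
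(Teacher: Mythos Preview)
Your proof is correct and follows essentially the same approach as the paper: apply Lemma~\ref{lem:dual-dists-sets} to each pair $(M_i,M_j)$, then use $\max_{i\ne j}(d-\r(M_i,M_j))=d-\min_{i\ne j}\r(M_i,M_j)$ and recognize the minimum as $\a(D,d-M)$.
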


\begin{proof}
Indeed, let $\r$ be the metric of the space $d-M$. Then, by Lemma~\ref{lem:dual-dists-sets}, it holds
\begin{multline*}
\b(D,M)=\max\bigl\{|M_iM_j|':i\ne j\bigr\}=\max\bigl\{d-\r(M_i,M_j):i\ne j\bigr\}=\\
=d-\min\bigl\{\r(M_i,M_j):i\ne j\bigr\}=d-\a(D,d-M).
\end{multline*}
\end{proof}

Up to the end of this section, $M$ stands for a finite metric space consisting of $n$ points, and $\S(M)=(\S_1,\ldots,\S_{n-1})$.

The next result is an analogue of Proposition~\ref{prop:spect-calc}.

\begin{prop}\label{prop:spect-calc-dual}
We have $\S_k=\min\bigl\{\b(D,M):D\in\cD_{k+1}(M)\bigr\}$.
\end{prop}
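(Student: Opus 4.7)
My plan is to deduce Proposition~\ref{prop:spect-calc-dual} from its $\mst$-analogue, Proposition~\ref{prop:spect-calc}, via the duality $L := d - M$ for any fixed $d \ge 2\diam M$. The key preliminary step is to establish the componentwise spectrum identity $\Sigma_k(M) = d - \sigma_k(L)$. For any spanning tree $G$ on the underlying set $M$, $|G|_M + |G|_L = d(n-1)$, so maximum spanning trees of $M$ coincide with minimum spanning trees of $L$ (just as in Proposition~\ref{prop:min-max-trees}). The bijection $\ell \mapsto d - \ell$ between edge lengths is order-reversing, so the $k$-th smallest edge of such a tree measured in $M$ is the $k$-th largest one measured in $L$, yielding the identity componentwise.

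Given this, the rest is a short reduction. Proposition~\ref{prop:spect-calc} applied to the finite metric space $L$ gives
$$
\sigma_k(L) = \max\bigl\{\alpha(D, L) : D \in \cD_{k+1}(L)\bigr\}.
$$
Since $\cD_{k+1}(L) = \cD_{k+1}(M)$ (same underlying set), and since Lemma~\ref{lem:beta-alpha-dual} says $\alpha(D, L) = d - \beta(D, M)$, this becomes $\sigma_k(L) = d - \min_D \beta(D, M)$. Substituting into $\Sigma_k(M) = d - \sigma_k(L)$ yields the claim.

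The only delicate point is the bookkeeping in the preliminary step, where one must track how the order-reversing duality identifies the ascending $\xst$-spectrum of $M$ with the descending $\mst$-spectrum of $L$ entry by entry. A self-contained alternative paralleling Lemmas~\ref{lem:spec_inf} and~\ref{lem:spec_inf_2} would instead analyze the partition $D$ obtained by deleting the $k$ lightest edges of a maximum spanning tree $G$: a swap argument (replacing a deleted light edge by any strictly longer cross-partition segment would strictly increase $|G|$) shows $\beta(D) = \Sigma_k$, and any other $(k+1)$-partition $D'$ is crossed by at least $k$ edges of $G$, the longest of which has length $\ge \Sigma_k$, giving $\beta(D') \ge \Sigma_k$.
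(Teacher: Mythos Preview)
Your argument is correct and follows essentially the same route as the paper: fix $d\ge 2\diam M$, use the duality $\Sigma_k(M)=d-\sigma_k(d-M)$ (which the paper pulls from Corollary~\ref{cor:spectrum-duality}, and which you rederive directly), then apply Proposition~\ref{prop:spect-calc} to $d-M$ together with Lemma~\ref{lem:beta-alpha-dual} and the observation $\cD_{k+1}(d-M)=\cD_{k+1}(M)$. The self-contained alternative you sketch at the end (delete the $k$ lightest edges of a maximum spanning tree and run a swap/exchange argument) is a genuine second proof not present in the paper; it trades the duality machinery for a direct combinatorial argument and is the natural $\xst$-analogue of Lemmas~\ref{lem:spec_inf} and~\ref{lem:spec_inf_2}.
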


\begin{proof}
Choose an arbitrary $d\ge2\diam M$, then $\s(d-M)=d-\S(M)$. Put $\s(d-M)=(\s_1,\ldots,\s_{n-1})$ and $\S(M)=(\S_1,\ldots,\S_{n-1})$. By Proposition~\ref{prop:spect-calc} and Lemma~\ref{lem:beta-alpha-dual}, we have
\begin{multline*}
\S_k=d-\s_k=d-\max\bigl\{\a(D,d-M):D\in\cD_{k+1}(d-M)\bigr\}=\\ =\min\bigl\{d-\a(D,d-M):D\in\cD_{k+1}(d-M)\bigr\}=\min\bigl\{\b(D,M):D\in\cD_{k+1}(d-M)\bigr\}.
\end{multline*}
\end{proof}

\begin{rk}
Under the above notations, it holds $\diam M=\S_{n-1}$.
\end{rk}

In the next two Propositions, we use $G=(M,E)\in\XST(M)$, and the edges $e_i\in E$ are supposed to be ordered in such a way that $|e_i|=\S_i$. Moreover, for the convenience reason, we put $|e_0|=\S_0=0$.

\begin{prop}\label{prop:diams-of-xms-partition}
Let $\{M_1,\ldots,M_{k+1}\}\in\cD_{k+1}(M)$ be a partition into the vertex sets of the trees forming the forest $F=G\sm\{e_{n-k},\ldots,e_{n-1}\}$. Then for each $i$ we have $\diam M_i\le|e_{n-k-1}|=\S_{n-k-1}$.
\end{prop}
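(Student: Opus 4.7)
The plan is to argue by the standard exchange (swap) property of maximum spanning trees, without invoking the duality constructions of the section (though they would give an alternative route via the $\MST$ lemmas). Fix any $p,q\in M_i$; the goal is $|pq|\le\S_{n-k-1}=|e_{n-k-1}|$. Since $p$ and $q$ lie in the same component of the forest $F=G\sm\{e_{n-k},\ldots,e_{n-1}\}$, the unique $pq$-path $P$ in the tree $G$ stays inside this component, so every edge of $P$ is one of $e_1,\ldots,e_{n-k-1}$ and therefore has length at most $|e_{n-k-1}|=\S_{n-k-1}$.

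Now suppose, toward a contradiction, that $|pq|>\S_{n-k-1}$. Pick any edge $e\in P$; removing $e$ from $G$ splits $G$ into two subtrees whose vertex sets separate $p$ from $q$ (since $P$ is the \emph{unique} $pq$-path in $G$). Adding the edge $\{p,q\}$ reconnects these two subtrees, so $G':=(G\sm\{e\})\cup\{\{p,q\}\}$ is again a spanning tree on $M$, and
$$
|G'|=|G|-|e|+|pq|>|G|,
$$
because $|e|\le\S_{n-k-1}<|pq|$. This contradicts $G\in\XST(M)$, proving $|pq|\le\S_{n-k-1}$. Taking supremum over $p,q\in M_i$ yields $\diam M_i\le\S_{n-k-1}$.

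I do not foresee any real obstacle here: the entire argument is one application of the exchange property. The only points that need to be written out cleanly are (i) that the $pq$-path in the tree $G$ is forced to stay within the relevant component of $F$ (immediate from the uniqueness of paths in a tree and the fact that $p,q\in M_i$), and (ii) that removing an edge of $P$ genuinely disconnects $p$ from $q$, so that adding $\{p,q\}$ restores a tree rather than creating a cycle — again immediate from tree uniqueness. Everything else is arithmetic.
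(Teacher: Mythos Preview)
Your argument is correct and is essentially the paper's own proof: both observe that the unique $pq$-path in $G$ uses only edges $e_j$ with $j\le n-k-1$, and then invoke the maximality of $G$ to conclude $|pq|\le|e_{n-k-1}|$. The only difference is cosmetic---the paper simply asserts ``since the tree $G$ is maximal, $|xy|\le|e_j|$ for some $j\le n-k-1$'', whereas you spell out the exchange (swap $e$ for $\{p,q\}$) that justifies this assertion.
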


\begin{proof}
Choose arbitrary $x,y\in M_i$. If $xy\in E$, then $|xy|\le|e_{n-k-1}|$ by the order we have chosen on $E$. If $xy\not\in E$, then consider the unique path $\g$ in $G$ connecting $x$ and $y$. Since $M_i$ is the set of vertices of a tree from the forest $F$, then for each edge $e_j$ of this path it holds $j\le n-k-1$, and, therefore, $|e_j|\le|e_{n-k-1}|$. Since the tree $G$ is maximal, then $|xy|\le|e_j|$ for some $j\le n-k-1$, hence $\diam M_i\le|e_{n-k-1}|$.
\end{proof}

\begin{prop}\label{prop:dual-dists-of-xms-partition}
Let $\{M_1,\ldots,M_{k+1}\}\in\cD_{k+1}(M)$ be a partition into the vertex sets of the trees forming the forest $F=G\sm\{e_1,\ldots,e_k\}$. Then for every $i\ne j$ we have $|M_iM_j|'\le|e_k|=\S_k$.
\end{prop}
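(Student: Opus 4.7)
My plan is to follow the template set by Proposition~\ref{prop:diams-of-xms-partition}, replacing ``diameter within a component'' by ``supremal distance between two components'', and using the maximality of $G$ to bound an arbitrary cross-pair distance $|ab|$, $a\in M_i$, $b\in M_j$, by the length of some removed edge.

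First I would fix $a\in M_i$ and $b\in M_j$ with $i\ne j$, and examine the unique path $\g$ in $G$ joining $a$ to $b$. Since $M_i$ and $M_j$ sit in different components of the forest $F=G\sm\{e_1,\ldots,e_k\}$, the path $\g$ must contain at least one edge $e_p$ with $p\le k$. I want to conclude $|ab|\le|e_p|$, which then yields $|ab|\le|e_k|=\S_k$ because the edges are ordered so that $|e_1|\le|e_2|\le\cdots$.

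The one non-cosmetic step, which I expect is the essential content, is the ``cycle property'' for a maximum spanning tree: if $ab\notin E$, then adding the edge $ab$ to $G$ creates a unique cycle consisting of $\g$ together with $ab$, and for any $e\in\g$ the graph $G\sm\{e\}\cup\{ab\}$ is again a spanning tree of $M$; by maximality of $|G|$ we must have $|ab|\le|e|$, in particular $|ab|\le|e_p|\le|e_k|$. If on the other hand $ab\in E$, then $ab$ itself is an edge of $G$ whose endpoints lie in different components of $F$, so $ab\in\{e_1,\ldots,e_k\}$ and hence $|ab|\le|e_k|$.

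Taking the supremum over $a\in M_i$, $b\in M_j$ then gives $|M_iM_j|'\le|e_k|=\S_k$. The argument is dual in spirit to the one in Lemma~\ref{lem:spec_inf}, where the analogous role was played by the minimality of the minimum spanning tree; here it is the maximality of $G$ that forces the inequality in the opposite direction.
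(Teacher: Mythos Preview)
Your proof is correct and follows essentially the same route as the paper's own argument: fix arbitrary points $a\in M_i$, $b\in M_j$, look at the unique path $\g$ in $G$ joining them, pick a removed edge $e_p$ (with $p\le k$) on $\g$, and then use the cycle property of the maximum spanning tree (or the direct observation when $ab\in E$) to conclude $|ab|\le|e_p|\le|e_k|$. The paper's proof differs only cosmetically in notation and phrasing.
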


\begin{proof}
Indeed, consider arbitrary $M_i, M_j$, $i\ne j$, and let $P_i\in M_i$, $P_j\in M_j$ be arbitrary points. Consider the unique path $\g$ in the tree $G$ connecting $P_i$ and $P_j$. This path contains at least one of the edges thrown out. Let it be $e_p$. If $P_iP_j$ is not an edge in $G$, then $G\cup P_iP_j$ contains a unique cycle $\g\cup P_iP_j$, and the maximality of $G$ implies that each edges of the path $\g$ is not shorter than $P_iP_j$. In particular, $|e_k|\ge|e_p|\ge |P_iP_j|$. If $P_iP_j$ is an edge of $G$, then it coincides with an edge we threw out, say, with $e_p$, and again we have $|e_k|\ge|e_p|=|P_iP_j|$. Thus, $|M_iM_j|'=\max|P_iP_j|\le|e_k|$.
\end{proof}

\subsection{$\xst$-spectrum of an arbitrary metric space}
Similarly with the section~\ref{subsec:mst-spector}, let us generalize the concept of $\xst$-spectrum by means of Proposition~\ref{prop:spect-calc-dual}.

\begin{dfn}
For any metric space $X$ and any $k\in\N$ we put $\S_k=\inf\{\b(D):D\in\cD_{k+1}(X)\}$ if $\cD_{k+1}(X)\ne\0$, and $\S_k=\infty$ otherwise. The set $\S(X)=\{\S_1,\S_2,\ldots\}$ we call the {\em $\xst$-spectrum of $X$}.
\end{dfn}

\begin{rk}
If $\#X=n$, then $\S_k=\infty$ for $k\ge n$.
\end{rk}

\section{Calculation of Distances between Compact Metric Space and Finite Simplexes}
\markright{\thesection.~Calculation of distances between compact metric space and finite simplexes}

We call a metric space $X$ a \emph{simplex}, if all its nonzero distances are the same. Notice that a simplex $X$ is compact, iff it is finite. A simplex consisting of $n$ vertices on the distance $\l$ from each other is denoted by $\l\D_n$. For $\l=1$ the space $\l\D_n$ is denoted by $\D_n$ for short.

\subsection{Distances from a finite metric space to simplexes with greater numbers of points}

\begin{thm}\label{thm:dist-n-simplex-bigger-dim}
Let $M$ be a finite metric space, $n=\#M$. Then for every $m\in\N$, $m>n$, and $\l>0$ we have
$$
2d_{GH}(\l\D_m,M)=\max\{\l,\diam M-\l\}.
$$
\end{thm}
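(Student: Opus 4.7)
\bigskip

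\noindent\textbf{Proof plan.}

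The plan is to bound $\dis R$ below for every $R\in\cR(\l\D_m,M)$ and then exhibit a single correspondence that meets this bound, so that
$$
2d_{GH}(\l\D_m,M)=\inf_{R\in\cR(\l\D_m,M)}\dis R=\max\{\l,\diam M-\l\}.
$$

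For the lower bound I would use two independent estimates and then combine them. First, since $m>n=\#M$, a pigeonhole argument on the preimage sets $R^{-1}(p)$, $p\in M$, shows that there is some $p\in M$ with $\#R^{-1}(p)\ge 2$\rom; pick distinct $s,s'\in R^{-1}(p)$. Then $(s,p),(s',p)\in R$ yield
$$
\dis R\ge \bigl||ss'|-|pp|\bigr|=\l.
$$
Second, by Proposition~\ref{prop:GH_simple}\eqref{prop:GH_simple:2} we have $\dis R\ge 2d_{GH}(\l\D_m,M)\ge|\diam M-\l|$. Combining,
$$
\dis R\ge\max\bigl\{\l,\,|\diam M-\l|\bigr\}=\max\{\l,\diam M-\l\},
$$
where the last equality is immediate by splitting into cases $\diam M\ge\l$ and $\diam M<\l$ (in the latter case both sides equal $\l$).

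For the upper bound, since $m>n$ I can choose any partition $\l\D_m=S_1\sqcup\cdots\sqcup S_n$ into $n$ nonempty pieces and set $R=\bigcup_{i=1}^n (S_i\x\{p_i\})$, where $M=\{p_1,\ldots,p_n\}$. This is a correspondence. Its distortion is computed by going through pairs $(s,p_i),(s',p_j)\in R$: if $s=s'$ then $i=j$ (disjointness) and the contribution is $0$\rom; if $s\ne s'$ and $i=j$ the contribution is $\l$\rom; if $s\ne s'$ and $i\ne j$ the contribution is $\bigl|\l-|p_ip_j|\bigr|$, which is bounded by $\l$ when $|p_ip_j|\le\l$ and by $\diam M-\l$ when $|p_ip_j|\ge\l$. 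Hence $\dis R\le\max\{\l,\diam M-\l\}$, matching the lower bound.

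I do not expect any serious obstacle: the pigeonhole step exploits the strict inequality $m>n$ cleanly, the diameter estimate is already in Proposition~\ref{prop:GH_simple}, and the construction of the extremal correspondence is forced by the fact that we have ``extra room'' on the simplex side to absorb the $\l$-sized gaps within a single fibre. The only mildly delicate point is the book-keeping that shows $\max\{\l,|\diam M-\l|\}=\max\{\l,\diam M-\l\}$, which is really a one-line case split.
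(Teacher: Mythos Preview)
Your proof is correct and follows essentially the same approach as the paper: a pigeonhole argument for the lower bound $\dis R\ge\l$, the same explicit correspondence (partition the simplex into $n$ nonempty fibres mapped to the points of $M$) for the upper bound. The only difference is that for the second half of the lower bound you invoke Proposition~\ref{prop:GH_simple}\eqref{prop:GH_simple:2} directly, whereas the paper re-derives $\dis R\ge\diam M-\l$ by an ad hoc case split on whether a diameter-realizing pair lies in the same fibre of $R$; your shortcut is a mild streamlining but not a genuinely different route.
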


\begin{proof}
Choose an arbitrary $R\in\cR(\l\D_m,M)$. Since $m>n$, then there exists $x\in M$ such that $\#R^{-1}(x)\ge2$, hence $\dis R\ge\l$ and, therefore, $2d_{GH}(\l\D_m,M)\ge\l$.

Put $M=\{x_1,\ldots,x_n\}$ and let $R$ be the correspondence $\bigl\{(i,x_i)\bigr\}_{i=1}^{n-1}\cup\{n,\ldots,m\}\x\{x_n\}$. Then
$$
\dis R=\max\biggl[\l,\,\max_{i\ne j}\Bigl\{\bigl||x_ix_j|-\l\bigr|\Bigr\}\biggr]=\max\{\l,\diam M-\l\},
$$
where the second equality follows from Proposition~\ref{prop:many_abs_dif_and_t}. This implies that $2d_{GH}(\l\D_m,M)\le\max\{\l,\diam M-\l\}$.

If $\diam M\le 2\l$, then $\max\{\l,\diam M-\l\}=\l$, hence $2d_{GH}(\l\D_m,M)=\l$, q.e.d.

If $\diam M>2\l$, then $\max\{\l,\diam M-\l\}=\diam M-\l$. Choose a pair $x,y\in M$ such that $\diam M=|xy|$. Take an arbitrary $R\in\cR(\l\D_m,M)$. Then one of the following conditions holds:
\begin{enumerate}
\item there exists $i\in\l\D_m$ such that $(i,x),(i,y)\in R$, but then $\dis R\ge\diam M>\diam M-\l$;
\item there exist $i\ne j$ such that $(i,x),(j,y)\in R$, and then $\dis R\ge\diam M-\l$.
\end{enumerate}
Thus, for any $R\in\cR(\l\D_m,M)$ we have $\dis R\ge\diam M-\l$, therefore, in the case under consideration the equality $2d_{GH}(\l\D_m,M)=\diam M-\l$ is valid.
\end{proof}

\subsection{Distances from a compact metric space to simplexes with no greater number of points}

\begin{thm}\label{thm:m_less_than_n_corresp}
Let $X$ be a compact metric space. Then for every $m\in\N$, $m\le\#X$, and $\l>0$ there exists an $R\in\cR_{\opt}^0(\l\D_m,X)$ such that the family $\bigl\{R(i)\bigr\}$ is a partition of $X$. In particular, if $m=\#X$, then one can take a bijection as an optimal correspondence  $R$.
\end{thm}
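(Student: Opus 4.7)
The plan is to start from any $R\in\cR_\opt^0(\l\D_m,X)$, whose existence is guaranteed by the cited irreducibility result, and iteratively ``unfold'' it into a partition-like correspondence via a swap construction. The main structural tool I would establish at the outset is the characterization of irreducibility: for every $(i,x)\in R$, either $R(i)=\{x\}$ or $R^{-1}(x)=\{i\}$, since otherwise $(i,x)$ can be deleted without destroying the correspondence property. Consequently, if $R$ is not partition-like, some $x_0\in X$ has $k:=|R^{-1}(x_0)|\ge 2$, and for $J:=R^{-1}(x_0)$ every $j\in J$ satisfies $R(j)=\{x_0\}$. A side consequence is $\dis R\ge\l$, since distinct $j,j'\in J$ give $\bigl||jj'|-|x_0 x_0|\bigr|=\l$.

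Given such an $R$, the swap: fix $j_1\in J$ and select distinct targets $y_2,\ldots,y_k$ from $X\setminus X'$, where $X':=\{x:|R^{-1}(x)|\ge 2\}$, chosen greedily so that no $R(i)$ with $i\notin I':=\cup_{x\in X'}R^{-1}(x)$ is exhausted by $\{y_2,\ldots,y_k\}$. The counting backing this up is short: the preimages $R^{-1}(x)$ for $x\in X'$ are pairwise disjoint (by the structural observation applied to two different multi-mapped points sharing a preimage), so $|I'|=\sum_{x\in X'}|R^{-1}(x)|$, the potential $\Phi(R):=|I'|-|X'|$ satisfies $\Phi(R)\ge k-1$, and using $\#X\ge m$ one shows at each of the $k-1$ greedy steps at least $\Phi(R)-t\ge 1$ admissible points remain. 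I would then define $R'$ by deleting every $(j_s,x_0)$ and every $(i,y_s)$ with $i\in R^{-1}(y_s)\setminus\{j_s\}$, and adding the pairs $(j_s,y_s)$ for $s=2,\ldots,k$; the greedy reservation ensures $R'$ is still a correspondence.

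The key verification is $\dis R'\le\dis R$. The only pairs of pairs not already present in $R$ have the form $((j_s,y_s),(i,x'))$ with $i\ne j_s$; they contribute $|\l-|y_s x'||$. Choosing $i_s\notin J$ with $y_s\in R(i_s)$, I would split into cases: if there exists $i\ne i_s$ with $(i,x')\in R$, then $|\l-|y_s x'||=\bigl||i\,i_s|-|y_s x'|\bigr|\le\dis R$ directly; otherwise $x'\in R(i_s)$, whence $|y_s x'|\le\dis R$, and Proposition~\ref{prop:max_abs} combined with $\dis R\ge\l$ yields $|\l-|y_s x'||\le\max\{\l,|y_s x'|\}\le\dis R$. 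Hence $R'\in\cR_\opt$, and passing to any irreducible $R''\subseteq R'$ preserves optimality.

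Termination is controlled by $\Phi$. The disjointness observation above gives $\Phi(R)\le m$ for every irreducible $R$, uniformly over (possibly infinite) $X$. The swap strictly decreases $\Phi$ by at least $k-1\ge 1$: the multiplicity of $x_0$ collapses to $1$, the newly introduced $y_s$ end up with multiplicity $1$ after the final irreducibility reduction, and no other preimage is touched. So after at most $m$ iterations we reach an $R\in\cR_\opt^0$ with $\Phi(R)=0$, i.e., partition-like. Partition-like correspondences are automatically irreducible, since removing any pair would leave some $x\in X$ uncovered; and for $m=\#X$, a partition of $X$ into $m$ non-empty parts must consist of singletons, so $R$ is a bijection. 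I expect the main obstacle to be precisely this greedy selection of the $y_s$: the reservation argument that simultaneously avoids $X'$, avoids the unique element of any singleton $R(i)$ with $i\notin I'$, and still leaves $k-1$ distinct admissible options.
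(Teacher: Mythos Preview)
Your argument is correct and shares its skeleton with the paper's proof: start from an irreducible optimal correspondence, use the structural fact that overlapping fibers must be common singletons, perform swaps that move points out of multi-element fibers into the duplicated slots, verify that the distortion does not increase (using $\dis R\ge\l$ together with Proposition~\ref{prop:max_abs}), and control termination by a potential bounded in terms of $m$.

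The one substantive organizational difference is granularity. You resolve an entire block $J=R^{-1}(x_0)$ in a single batch, and this is precisely what forces the greedy reservation argument you flag as ``the main obstacle.'' The paper instead performs one swap per step: given $j\ne k$ with $R(j)=R(k)=\{x\}$ and any $i$ with $\#R(i)>1$, it picks one $x_i\in R(i)$ and sets $\tR(i)=R(i)\sm\{x_i\}$, $\tR(j)=\{x_i\}$, $\tR(k)=\{x\}$. No reservation is needed, since only one point leaves one fiber and that fiber had at least two points to begin with. The paper's potential is $\np(R)$, the number of unordered pairs $\{j,k\}$ with $R(j)\cap R(k)\ne\0$, rather than your $\Phi=|I'|-|X'|$; both decrease strictly and are bounded by a function of $m$ alone. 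Incidentally, your $R'$ is already irreducible (each $R'(j_s)$ is a singleton, and for $i\notin I'$ every $x\in R'(i)\ss R(i)$ still has $R'^{-1}(x)=\{i\}$), so the passage to a further $R''\ss R'$ is unnecessary. In short, your proof is valid, and essentially the paper's approach; the paper's single-step swap just sidesteps the greedy bookkeeping entirely.
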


\begin{proof}
Let $R\in\cR(\D_m,X)$ be an arbitrary irreducible correspondence. Since $R$ is irreducible, the condition $R(j)\cap R(k)\ne\0$ for some $j\ne k$ implies $R(j)=R(k)=\{x\}$ for some $x\in X$. In particular, if $\#R(i)>1$ for some $i$, then $R(i)$ does not intersect any $R(p)$, $p\ne i$. Let us introduce the following notation: for $\s\in\cP(\D_m,X)$ we put $\np(\s)$ to be equal to the number of pairs $\{j,k\}$ such that $j\ne k$ and $\s(j)\cap\s(k)\ne\0$. Clearly that for $\s\in\cR(\D_m,X)$ the condition $\np(\s)=0$ is equivalent to that the family $\bigl\{\s(i)\bigr\}$ forms a partition $X$.

Suppose that the family $\bigl\{R(i)\bigr\}$ is not a partition. We show that in this case one can always reconstruct the correspondence $R$ in such a way that the resulting correspondence $\tR$ becomes an irreducible one with $\dis\tR\le\dis R$ and $\np(\tR)<\np(R)$. If $\np(\tR)>0$, then we put $R=\tR$, and repeat this procedure. After a finite number of steps we will get a correspondence $\tR$ such that $\np(\tR)=0$ and $\dis\tR\le\dis R$, q.e.d.

So, let for some $j\ne k$ it holds $R(j)=R(k)=\{x\}$. Since $m\le\#X$, then there exists $i$ such that $\#R(i)>1$, hence $i\not\in\{j,k\}$ and $R(i)$ does not intersect any other $R(p)$. Choose an arbitrary point $x_i\in R(i)$, and construct a new correspondence $\tR$ which coincides with $R$ at all elements of the simplex,  except $i$, $j$, $k$, and
$$
\tR(i)=R(i)\sm\{x_i\}, \quad \tR(j)=\{x_i\}, \quad \tR(k)=\{x\}.
$$
Clearly that $\tR\in\cR(\D_m,X)$ since $R$ is uniquely defined on all elements of the simplex, and each element of $X$ goes to at least one element of the simplex. Besides that, the correspondence $\tR$ is still irreducible, which can be verified directly.

Further, to estimate $\np(\tR)$ let us notice that among all $R(p)$ only $R(i)$ and $R(j)$ are changed, thus, it is sufficient to investigate how the intersections with these two sets changes. Since $R(i)$ does not intersect the remaining $R(p)$, and since $\tR(i)\sqcup\tR(j)=R(i)$, then $\tR(i)$ and $\tR(j)$ does not intersect the remaining $\tR(p)$. Besides that, $\tR(i)\cap\tR(j)=\0$. Thus, the number of the remaining $R(p)$ intersecting with $R(i)$ is the same as the number of the remaining $\tR(p)$ intersecting with $\tR(i)$ (and it is equal $0$). Concerning $R(j)$, the intersection $R(j)\cap R(k)$ is nonempty (and, perhaps, there are some other nonempty intersections with $R(j)$). However, $\tR(j)$ does not intersect any of the remaining $\tR(p)$, hence $\np(\tR)<\np(R)$.

Let us prove that $\dis\tR\le\dis R$.

Put
$$
M(R,p,q)=\max\Bigl\{\big| |x_px_q|-1\big|:x_p\in R(p),\,x_q\in R(q),\,p\ne q\Bigr\}.
$$
Recall that
$$
\dis R=\max\big\{\dl_R=1,\,\max_p\diam R(p),\,\max_{p\ne q}M(R,p,q)\big\},
$$
and
$$
\dis\tR=\max\big\{\dl_\tR,\,\max_p\diam\tR(p),\,\max_{p\ne q}M(\tR,p,q)\big\}.
$$
Clearly that $\dl_{\tR}\le 1\le\dis R$ and $\diam\tR(p)\le\diam R(p)\le\dis R$ for all $p$. To complete the proof it remains to show that for any $p$ and $q$ the inequality $M(\tR,p,q)\le\dis R$ holds.

If $p$ and $q$ are not contained in $\{i,j,k\}$, then $M(\tR,p,q)=M(R,p,q)\le\dis R$.

Now, suppose that one of the indices $p$ and $q$, say $p$, is not contained in $\{i,j,k\}$, but the remaining one is contained. In this case,
\begin{description}
\item[] $M(\tR,p,k)=M(R,p,k)\le\dis R$, because $\tR(p)=R(p)$ and $\tR(k)=R(k)$;
\item[] $M(\tR,p,i)\le M(R,p,i)\le\dis R$, because $\tR(i)\ss R(i)$;
\item[] $M(\tR,p,j)\le M(R,p,i)\le\dis R$, because  $\tR(j)\ss R(i)$.
\end{description}

Finally, consider the case $\{p,q\}\ss\{i,j,k\}$. We have
\begin{description}
\item[] $M(\tR,i,k)\le M(R,i,k)\le\dis R$, because $\tR(i)\ss R(i)$ and $\tR(k)=R(k)$;
\item[] $M(\tR,j,k)\le M(R,i,k)\le\dis R$, because $\tR(j)\ss R(i)$ and $\tR(k)=R(k)$;
\end{description}
and
\begin{multline*}
M(\tR,i,j)=\max\Bigl\{\big| |x_i'x_i|-1\big|:x_i'\in \tR(i)=R(i)\sm\{x_i\}\Bigr\}\le\\
\le\max\Bigl\{1,\big| |x_i'x_i|-1\big|:x_i'\in \tR(i)\Bigr\}\le\max\Bigl\{1,\max\big\{ |x_i'x_i| :x_i'\in \tR(i)\big\}\Bigr\}\le\\
\le\max\big\{1,\diam R(i)\big\}\le\dis R,
\end{multline*}
where the second inequality holds according to Proposition~\ref{prop:max_abs}.

Thus, all the values from the expression for $\dis \tR$ do not exceed $\dis R$, hence $\dis\tR\le\dis R$, q.e.d.
\end{proof}

Theorem~\ref{thm:m_less_than_n_corresp} helps to get a useful formula for Gromov--Hausdorff distance between a compact metric space $X$ and a finite simplex such that the number of points in the simplex does not exceed the cardinality of $X$.

\begin{constr}
For an \textbf{arbitrary\/} metric space $X$, $m\le\#X$, and $D=\{X_1,\ldots,X_m\}\in\cD_m(X)$ we put $R_D=\sqcup\,\bigl(\{i\}\x X_i\bigr)$. Notice that for any $D'\in\cD_m(X)$ which differs from $D$ by renumbering of the elements of the partition $D$, we have $\dis R_D=\dis R_{D'}$.
\end{constr}

\begin{notation}
For $D=\{X_1,\ldots,X_m\}\in\cD_m(X)$ let us put
$$
\diam D=\max\{\diam X_1,\ldots,\diam X_m\}.
$$
\end{notation}

\begin{prop}\label{prop:disRD}
Let $X$ be an arbitrary metric space, and $m\in\N$, $m\le\#X$. Then for any $\l>0$ and $D\in\cD_m(X)$ it holds
$$
\dis R_D=\max\{\diam D,\,\l-\a(D),\,\b(D)-\l\}.
$$
\end{prop}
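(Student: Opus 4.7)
The plan is to compute $\dis R_D$ directly from its definition, splitting the supremum over pairs $\bigl((x,y),(x',y')\bigr)\in R_D\x R_D$ according to whether the two simplex-points $x,x'$ coincide, and then to identify each piece with one of the three quantities on the right-hand side.

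First I would observe that by construction $(x,y)\in R_D$ means $y\in X_i$ where $x$ is the $i$-th vertex of $\l\D_m$. Hence the supremum defining $\dis R_D$ splits into two contributions: the \emph{diagonal} one, where $x=x'=i$ and $y,y'\in X_i$, and the \emph{off-diagonal} one, where $x=i\ne j=x'$ with $y\in X_i$, $y'\in X_j$. On the diagonal part we have $|xx'|=0$, so the quantity $\bigl||xx'|-|yy'|\bigr|$ reduces to $|yy'|$, whose supremum over $y,y'\in X_i$ is $\diam X_i$; taking the maximum over $i$ produces $\diam D$.

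For the off-diagonal part, $|xx'|=\l$, so I need
$$
\sup\bigl\{|\l-|yy'||:y\in X_i,\,y'\in X_j\bigr\}
$$
for each pair $i\ne j$. Let $A_{ij}=\bigl\{|yy'|:y\in X_i,\,y'\in X_j\bigr\}\ss\R$, a nonempty bounded set with $\inf A_{ij}=|X_iX_j|$ and $\sup A_{ij}=|X_iX_j|'$. Applying Proposition~\ref{prop:many_abs_dif} with $t=\l$ gives
$$
\sup_{a\in A_{ij}}|\l-a|=\max\bigl\{\l-|X_iX_j|,\,|X_iX_j|'-\l\bigr\}.
$$
Taking the maximum over $i\ne j$, the two pieces decouple: $\max_{i\ne j}\bigl(\l-|X_iX_j|\bigr)=\l-\min_{i\ne j}|X_iX_j|=\l-\a(D)$ and $\max_{i\ne j}\bigl(|X_iX_j|'-\l\bigr)=\b(D)-\l$.

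Finally, combining the diagonal and off-diagonal contributions yields
$$
\dis R_D=\max\bigl\{\diam D,\,\l-\a(D),\,\b(D)-\l\bigr\},
$$
as claimed. There is no real obstacle beyond bookkeeping: the statement is essentially a direct evaluation of $\dis R_D$, and the only nontrivial ingredient is Proposition~\ref{prop:many_abs_dif}, which handles the ``$|\l-a|$'' supremum over a bounded subset of $\R$. Note that $\l-\a(D)$ or $\b(D)-\l$ may be negative, but this is harmless since $\diam D\ge0$ is always in the maximum.
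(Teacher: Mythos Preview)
Your proof is correct and follows essentially the same approach as the paper: both split the distortion into the diagonal contribution (yielding $\diam D$) and the off-diagonal contribution, and both invoke Proposition~\ref{prop:many_abs_dif} to evaluate the latter. The only cosmetic difference is that you apply Proposition~\ref{prop:many_abs_dif} to each $A_{ij}$ separately and then take the maximum over $i\ne j$, whereas the paper applies it once to the full set $\bigl\{|xy|:x\in X_p,\,y\in X_q,\,p\ne q\bigr\}$; since $\inf$ and $\sup$ of this set are exactly $\a(D)$ and $\b(D)$, the two routes are equivalent.
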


\begin{proof}
Let $D=\{X_1,\ldots,X_m\}$. By definition of distortion,
$$
\dis R_D=\sup\{\diam D,\,\bigl|\l-|xy|\bigr|:x\in X_p,\,y\in X_q,\,1\le p<q\le m\}.
$$
By Proposition~\ref{prop:many_abs_dif}, we have
$$
\sup\{\bigl|\l-|xy|\bigr|:x\in X_p,\,y\in X_q,\,1\le p<q\le m\}=\max\bigl\{\l-\a(D),\,\b(D)-\l\bigr\}.
$$
\end{proof}

\begin{prop}\label{prop:GH-dist-RD}
Let $X$ be a compact metric space. Then for every $m\in\N$, $m\le\#X$, and $\l>0$ it holds
$$
2d_{GH}(\l\D_m,X)=\inf_{D\in\cD_m(X)}\dis R_D.
$$
\end{prop}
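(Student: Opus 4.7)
The plan is to obtain the equality by proving the two inequalities separately, both of which follow readily from the machinery already set up.

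For the inequality $2d_{GH}(\l\D_m,X) \le \inf_{D\in\cD_m(X)}\dis R_D$, I observe that for every partition $D=\{X_1,\ldots,X_m\}\in\cD_m(X)$, the relation $R_D=\sqcup\,\bigl(\{i\}\x X_i\bigr)$ is a genuine correspondence: the projection to $\l\D_m$ is surjective because every $X_i$ is nonempty, and the projection to $X$ is surjective because $D$ is a partition of $X$. By the basic formula $d_{GH}(X,Y)=\frac12\inf\{\dis R:R\in\cR(X,Y)\}$ (stated just before Proposition~\ref{prop:GH_simple}), this gives $2d_{GH}(\l\D_m,X) \le \dis R_D$ for each $D$, whence the claimed inequality after taking the infimum.

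For the reverse inequality $2d_{GH}(\l\D_m,X) \ge \inf_{D\in\cD_m(X)}\dis R_D$, I invoke Theorem~\ref{thm:m_less_than_n_corresp}: since $X$ is compact and $m\le \#X$, there exists $R\in\cR_\opt^0(\l\D_m,X)$ such that $\{R(i)\}_{i=1}^m$ is a partition of $X$. Setting $D=\{R(1),\ldots,R(m)\}\in\cD_m(X)$, we have tautologically $R=R_D$, and optimality yields $2d_{GH}(\l\D_m,X) = \dis R = \dis R_D \ge \inf_{D'\in\cD_m(X)}\dis R_{D'}$.

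Combining both directions gives the stated equality. There is no real obstacle here: the substantive content has been shifted into Theorem~\ref{thm:m_less_than_n_corresp}, which already guarantees that the infimum over all correspondences coincides with the infimum over the special correspondences $R_D$ induced by partitions; the present proposition is essentially a bookkeeping translation of that fact into the explicit infimum formula, aided by the convenient expression for $\dis R_D$ proved in Proposition~\ref{prop:disRD}.
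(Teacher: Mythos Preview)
Your proof is correct and follows essentially the same approach as the paper: both rely on Theorem~\ref{thm:m_less_than_n_corresp} to produce an optimal correspondence of the form $R_D$, which immediately gives the equality. The paper's version is terser, compressing both inequalities into a single sentence, while you spell out the easy direction $2d_{GH}\le\inf_D\dis R_D$ explicitly; your final remark about Proposition~\ref{prop:disRD} is tangential, since that formula is not actually needed here but only in the subsequent Corollary.
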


\begin{proof}
By Theorem~\ref{thm:m_less_than_n_corresp}, there exists an $R\in\cR_{\opt}^0(\l\D_m,X)$ such that the family $\bigl\{R(i)\bigr\}$ is a partition of $X$. Thus, $d_{GH}(\l\D_m,X)$ is achieved at some $R_D$, $D\in\cD_m(X)$.
\end{proof}

\begin{cor}\label{cor:GH-dist-alpha-beta}
Let $X$ be a compact metric space, and $m\in\N$, $m\le\#X$. Then for any $\l>0$ we have
$$
2d_{GH}(\l\D_m,X)=\inf\Bigl\{\max\bigl(\diam D,\,\l-\a(D),\,\b(D)-\l\bigr):D\in\cD_m(X)\Bigr\}.
$$
\end{cor}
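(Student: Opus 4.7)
The plan is to simply chain together the two immediately preceding results. By Proposition~\ref{prop:GH-dist-RD}, under the hypothesis $m\le\#X$ (and compactness of $X$), the Gromov--Hausdorff distance is realized as an infimum of distortions over partition-induced correspondences:
$$
2d_{GH}(\l\D_m,X)=\inf_{D\in\cD_m(X)}\dis R_D.
$$

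Next, I would invoke Proposition~\ref{prop:disRD}, which gives the explicit formula
$$
\dis R_D=\max\bigl\{\diam D,\,\l-\a(D),\,\b(D)-\l\bigr\}
$$
for every $D\in\cD_m(X)$. Substituting this expression inside the infimum yields exactly the claimed identity.

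Since both ingredients are already established in the excerpt and the corollary is a verbatim substitution, there is no genuine obstacle to overcome here; the only thing to check is that the hypotheses ($X$ compact, $m\le\#X$, $\l>0$, $D$ ranging over $\cD_m(X)$) match between the two propositions and the statement being proved, which they do.
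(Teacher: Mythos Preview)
Your proof is correct and is exactly what the paper intends: the corollary is stated without proof because it follows immediately by substituting the expression for $\dis R_D$ from Proposition~\ref{prop:disRD} into the infimum formula of Proposition~\ref{prop:GH-dist-RD}.
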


\subsection{Distance from finite metric space to simplexes with the same number of points}

For any metric space $X$ we put
$$
\e(X)=\inf\bigl\{|xy|:x,y\in X,\,x\ne y\bigr\}.
$$
Notice that $\e(X)\le\diam X$, and the equality holds, iff $X$ is a simplex. Besides that, if $M$ is a finite metric space consisting of $n$ points, and $\s(M)=(\s_1,\ldots,\s_{n-1})$, then  $\e(M)=\s_{n-1}$.

\begin{thm}\label{thm:dist-n-simplex-same-dim}
Let $M$ be a finite metric space, $\#M=n$, $\s(M)=(\s_1,\ldots,\s_{n-1})$, $\S(M)=(\S_1,\ldots,\S_{n-1})$, $\l>0$. Then
$$
2d_{GH}(\l\D_n,M)=\max\{\l-\s_{n-1},\,\S_{n-1}-\l\}=\max\bigl\{\l-\e(M),\,\diam M-\l\bigr\}.
$$
More exactly, if $\s_{n-1}+\S_{n-1}\le2\l$, then $2d_{GH}(\l\D_n,M)=\l-\s_{n-1}$, otherwise $2d_{GH}(\l\D_n,M)=\S_{n-1}-\l$.
\end{thm}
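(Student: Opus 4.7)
The plan is to apply Corollary~\ref{cor:GH-dist-alpha-beta} in the borderline case $m=n=\#M$. Because $\#M=n$, the family $\cD_n(M)$ contains exactly one element, namely the partition $D_0$ of $M$ into $n$ singletons, so the infimum in the corollary collapses to a single value. Proposition~\ref{prop:disRD} then gives
$$
2d_{GH}(\l\D_n,M)=\max\bigl\{\diam D_0,\,\l-\a(D_0),\,\b(D_0)-\l\bigr\},
$$
which I would evaluate directly.

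The three ingredients at $D_0$ are easy to read off: $\diam D_0=0$; Proposition~\ref{prop:spect-calc} with $k=n-1$ (whose only admissible partition is $D_0$ itself) identifies $\s_{n-1}=\a(D_0)=\min_{i\ne j}|x_ix_j|=\e(M)$; and the Remark following Proposition~\ref{prop:spect-calc-dual}, together with the definition of $\b$, gives $\S_{n-1}=\b(D_0)=\diam M$. Both forms of the claimed equality for the maximum then follow at once, provided the spurious $0$-term in the maximum is absorbable.

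For that I would observe $\s_{n-1}\le\S_{n-1}$, so for every $\l>0$ at least one of $\l-\s_{n-1}$ and $\S_{n-1}-\l$ is nonnegative, and hence the $0$-term may be dropped. The \emph{more exactly} clause is then pure arithmetic: $\l-\s_{n-1}\ge\S_{n-1}-\l$ iff $\s_{n-1}+\S_{n-1}\le2\l$. I do not anticipate a genuine obstacle here---the whole point is that the quantifier over $\cD_m(X)$ in Corollary~\ref{cor:GH-dist-alpha-beta} becomes trivial when $m$ equals the cardinality of the space, so only the identifications $\a(D_0)=\s_{n-1}$ and $\b(D_0)=\S_{n-1}$ deserve brief justification.
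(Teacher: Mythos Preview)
Your proposal is correct and follows essentially the same route as the paper: apply Corollary~\ref{cor:GH-dist-alpha-beta} with $m=n$, observe that the only partition into $n$ parts is the singleton partition with $\diam D=0$, $\a(D)=\s_{n-1}$ and $\b(D)=\S_{n-1}$, and read off the result. Your write-up is actually more careful than the paper's---you explicitly justify why the $0$-term drops out and spell out the arithmetic behind the ``more exactly'' clause, which the paper leaves implicit.
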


\begin{proof}
For each $D\in\cD_n(X)$ it holds $\diam D=0$. Besides that, for all such $D$ we have $\a(D)=\s_{n-1}$ and $\b(D)=\S_{n-1}$. It remains to apply Corollary~\ref{cor:GH-dist-alpha-beta}.
\end{proof}

\subsection{Distance from a compact metric space to simplexes having at most the same number of points}

In~\cite{TuzMST-GH} the following result is proved.

\begin{prop}[\cite{TuzMST-GH}]\label{mst-spector-GH}
Let $X$ be a finite metric space, $\l\ge2\diam X$, and $\s(X)=(\s_1,\ldots,\s_{n-1})$. Then $2d_{GH}(\l\D_{k+1},X)=\l-\s_k$ for all $k=1,\ldots,n-1$.
\end{prop}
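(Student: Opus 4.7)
The plan is to deduce this proposition as a direct consequence of Corollary~\ref{cor:GH-dist-alpha-beta}, which is applicable because $k+1\le n=\#X$, and which gives
$$
2d_{GH}(\l\D_{k+1},X)=\inf\Bigl\{\max\bigl(\diam D,\,\l-\a(D),\,\b(D)-\l\bigr):D\in\cD_{k+1}(X)\Bigr\}.
$$
The main work is to exploit the hypothesis $\l\ge 2\diam X$ to show that, inside the maximum, the middle term $\l-\a(D)$ always dominates the other two for every partition $D\in\cD_{k+1}(X)$.

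First I would record the three elementary bounds. For any $D=\{X_1,\ldots,X_{k+1}\}\in\cD_{k+1}(X)$ (note $k+1\ge 2$, so there really are at least two parts), we have $\diam D\le\diam X$ and $\b(D)\le\diam X$ trivially, and also $\a(D)\le\diam X$ because $\a(D)\le|X_iX_j|\le|xy|\le\diam X$ for any choice of $x\in X_i$, $y\in X_j$ with $i\ne j$. Combined with $\l\ge 2\diam X$, these give
$$
\l-\a(D)\ge\l-\diam X\ge\diam X\ge\diam D,
$$
and
$$
\b(D)-\l\le\diam X-\l\le-\diam X\le 0\le\l-\a(D).
$$
Hence $\max\bigl(\diam D,\,\l-\a(D),\,\b(D)-\l\bigr)=\l-\a(D)$ for every such $D$.

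Plugging this into Corollary~\ref{cor:GH-dist-alpha-beta},
$$
2d_{GH}(\l\D_{k+1},X)=\inf_{D\in\cD_{k+1}(X)}\bigl(\l-\a(D)\bigr)=\l-\sup_{D\in\cD_{k+1}(X)}\a(D),
$$
and the supremum on the right equals $\s_k$ by Proposition~\ref{prop:spect-calc}. This yields $2d_{GH}(\l\D_{k+1},X)=\l-\s_k$, as required.

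No step is really an obstacle here: once Corollary~\ref{cor:GH-dist-alpha-beta} is in hand, the proof reduces to the three size comparisons above, each of which is immediate from $\l\ge 2\diam X$. The only point worth flagging is the need for the bound $\a(D)\le\diam X$, which requires that $D$ have at least two parts (guaranteed by $k\ge 1$).
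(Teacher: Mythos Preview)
Your proof is correct and follows essentially the same line as the paper. The paper itself does not prove this particular proposition (it is quoted from~\cite{TuzMST-GH}), but it immediately proves the more general Theorem~\ref{thm:spec} by the identical mechanism: apply Proposition~\ref{prop:disRD}/Corollary~\ref{cor:GH-dist-alpha-beta}, use the size hypothesis on $\l$ to show $\l-\a(D)$ dominates $\diam D$ and $\b(D)-\l$ for every $D$, and then take the infimum to recover $\l-\s_k$ via Proposition~\ref{prop:spect-calc}.
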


The next theorem generalizes Proposition~\ref{mst-spector-GH} to the case of compact metric spaces and also weaken the restrictions on the parameter $\l$.

\begin{thm}\label{thm:spec}
Let $X$ be a compact metric space, $\s(X)=\{\s_1,\s_2,\ldots\}$ be the $\mst$-spectrum of $X$, and $\l\ge\diam X+\s_k$. Then $2d_{GH}(\l\D_{k+1},X)=\l-\s_k$ for all $k\in\N$, $k+1\le\#X$.
\end{thm}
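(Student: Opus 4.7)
The plan is to apply Corollary~\ref{cor:GH-dist-alpha-beta}, which reduces the computation to
$$
2d_{GH}(\l\D_{k+1},X)=\inf_{D\in\cD_{k+1}(X)}\max\bigl\{\diam D,\,\l-\a(D),\,\b(D)-\l\bigr\},
$$
and then to show that, under the assumption $\l\ge\diam X+\s_k$, the term $\l-\a(D)$ dominates the other two for every admissible partition. The definition of $\s_k$ as $\sup\{\a(D):D\in\cD_{k+1}(X)\}$ will then turn the infimum into $\l-\s_k$.

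First I would establish the lower bound. For every $D\in\cD_{k+1}(X)$, by definition of $\s_k$ one has $\a(D)\le\s_k$, hence $\l-\a(D)\ge\l-\s_k$, so the maximum inside the infimum is at least $\l-\s_k$; this yields $2d_{GH}(\l\D_{k+1},X)\ge\l-\s_k$.

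Next, the upper bound. Choose a sequence $D_n\in\cD_{k+1}(X)$ with $\a(D_n)\to\s_k$ (such a sequence exists by the definition of supremum; if the sup is attained, a single partition suffices). The hypothesis $\l\ge\diam X+\s_k$ gives $\l-\s_k\ge\diam X$, and therefore $\l-\a(D_n)\ge\l-\s_k\ge\diam X$. Since $\diam D_n\le\diam X$ and $\b(D_n)\le\diam X$, this yields both $\diam D_n\le\l-\a(D_n)$ and $\b(D_n)-\l\le\diam X-\l\le-\s_k\le 0\le\l-\a(D_n)$. Consequently the maximum equals $\l-\a(D_n)$, which tends to $\l-\s_k$, so the infimum is $\le\l-\s_k$.

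Combining the two bounds gives the claimed equality. I do not expect a genuine obstacle here: once Corollary~\ref{cor:GH-dist-alpha-beta} is available, the whole argument reduces to the elementary observation that a sufficiently large $\l$ forces the $\a$-term to dominate the $\diam$-term and the $\b$-term, so the only thing that matters is how close $\a(D)$ can be pushed to $\s_k$. The only subtlety worth flagging is that, since $X$ is merely compact rather than finite, $\s_k$ need not be attained by any single partition, which is why one argues with an approximating sequence rather than an optimal $D$; but this is a harmless technicality rather than a substantive difficulty.
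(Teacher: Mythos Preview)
Your proof is correct and follows essentially the same route as the paper's: both invoke Proposition~\ref{prop:disRD}/Corollary~\ref{cor:GH-dist-alpha-beta} and check that under $\l\ge\diam X+\s_k$ the term $\l-\a(D)$ dominates the other two, so the infimum collapses to $\l-\s_k$. The paper is marginally more direct in that it observes $\dis R_D=\l-\a(D)$ for \emph{every} $D\in\cD_{k+1}(X)$ (since $\l-\a(D)\ge\l-\s_k\ge\diam X$ always), whence $\inf_D\dis R_D=\l-\sup_D\a(D)=\l-\s_k$ in one stroke; your approximating sequence for the upper bound is correct but unnecessary.
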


\begin{proof}
Put $m=k+1$ and choose an arbitrary partition $D\in\cD_m(X)$. Due to Proposition~\ref{prop:disRD}, we have
$$
\dis R_D=\max\{\diam D,\,\l-\a(D),\,\b(D)-\l\}.
$$
Notice that $\diam D\le\diam X$ and $\b(D)-\l\le\diam X-\l<\diam X$. Further, since $\l\ge\diam X+\s_k$, then $\l-\a(D)\ge\l-\s_k\ge\diam X$, and hence $\dis R_D=\l-\a(D)$. So, due to Proposition~\ref{prop:GH-dist-RD}, we have $2d_{GH}(\l\D_m,X)=\l-\s_k$.
\end{proof}

\begin{notation}
Let $X$ be an arbitrary metric space. Put $d_m(X)=\inf\bigl\{\diam D:D\in\cD_m(X)\bigr\}$, if $\cD_m(X)\ne\0$, and $d_m(X)=\infty$ otherwise.
\end{notation}

\begin{rk}
If $X$ is a finite metric space and $n=\#X$, then $d_n(X)=0$.
\end{rk}

Recall that a \emph{clique\/} in a simple graph is any its subgraph which is a complete graph. A graph is said to be  an \emph{$m$-clique}, if it contains a spanning subgraph which is a disjoint union of $m$ cliques. Notice that each graph having  $n$ vertices is $n$-clique.

Let $X$ be an arbitrary metric space, and $\dl\ge0$. By $G_\dl(X)$ we denote the graph with the vertex set $X$, where $v,w\in X$, $v\ne w$, are connected by edge iff $|vw|\le\dl$ (this graph is infinite, generally speaking). Define the number $\dl_m(X)$ to be equal to the infimum of $\dl\ge0$ such that $G_\dl(X)$ is $m$-clique. If there is no such $\dl$, then we put $\dl_m(X)=\infty$.

\begin{prop}
For an arbitrary metric space $X$ the equality $d_m(X)=\dl_m(X)$ holds.
\end{prop}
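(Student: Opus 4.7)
The plan is to observe that the two quantities are just two different ways of writing the same infimum, once one unpacks what ``$m$-clique'' means for the graph $G_\delta(X)$.

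First I would establish the key bridge lemma: for any $\delta \ge 0$, the graph $G_\delta(X)$ is $m$-clique if and only if there exists a partition $D = \{X_1,\ldots,X_m\} \in \cD_m(X)$ with $\diam X_i \le \delta$ for every $i$, i.e.\ with $\diam D \le \delta$. The forward direction uses that a spanning disjoint union of $m$ cliques in $G_\delta(X)$ gives such a partition of $X$; and since every pair of distinct vertices $v,w$ inside one clique $X_i$ satisfies $|vw|\le\delta$, we get $\diam X_i\le\delta$. Conversely, any partition with $\diam X_i\le\delta$ makes each $X_i$ induce a complete subgraph in $G_\delta(X)$, witnessing the $m$-clique property.

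Next I would deduce the equality
\begin{equation*}
\delta_m(X) = \inf\bigl\{\delta\ge0 : \exists\,D\in\cD_m(X)\text{ with }\diam D\le\delta\bigr\} = \inf_{D\in\cD_m(X)}\inf\{\delta\ge0 : \delta\ge\diam D\} = \inf_{D\in\cD_m(X)}\diam D = d_m(X),
\end{equation*}
swapping the order of the two infima. This handles the case $\cD_m(X)\ne\0$. For the degenerate case $\cD_m(X)=\0$ (which occurs precisely when $\#X<m$), both quantities are $\infty$: $d_m(X)=\infty$ by definition, and $G_\delta(X)$ has fewer than $m$ vertices so it cannot contain $m$ nonempty vertex-disjoint cliques, giving $\delta_m(X)=\infty$ as well.

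No step is really an obstacle here; the whole content is recognizing that the ``$m$-clique threshold'' of the $\delta$-graph and the ``minimum maximal part diameter'' are the same number by the same partition witnessing both. The only small care needed is in the bridge lemma, where one must use that $\diam X_i\le\delta$ is equivalent to every pair of distinct points of $X_i$ being $G_\delta$-adjacent (so that the clique condition lines up with the diameter bound without any strict/non-strict inequality mismatch, since the edge rule $|vw|\le\delta$ is non-strict).
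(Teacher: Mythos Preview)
Your proof is correct and rests on the same observation as the paper's: a spanning decomposition of $G_\delta(X)$ into $m$ cliques is exactly a partition $D\in\cD_m(X)$ with $\diam D\le\delta$. The paper establishes $\delta_m(X)\le d_m(X)$ and $d_m(X)\le\delta_m(X)$ separately by passing to sequences $(D_i)$ and $(\delta_i)$ approaching the respective infima, whereas you isolate the pointwise equivalence as a bridge lemma and then swap the two infima in one line; this is a cleaner packaging of the same argument, and it also absorbs the paper's separate treatment of the case $d_m(X)=\infty$ automatically.
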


\begin{proof}
Indeed, $\cD_m(X)=\0$ iff $\#X<m$, and the latter is equivalent to nonexistence of an $m$-clique graph with the vertex set $X$. So, in this case the required equality holds.

Let $\#X\ge m$. The condition $d_m(X)=\infty$ is equivalent to the fact that for any partition $D=\{X_i\}\in\cD(X)$ we have $\diam X_i=\infty$ for some $i$. The latter condition is equivalent to nonexistence of an $m$-clique subgraph in the complete graph with the vertex set $X$, such that all its edges do not exceed some value $\dl$. Thus, in this case the required equality holds as well.

Now, let $d_m(X)<\infty$. Consider the family of partitions $D_i\in\cD_m(X)$ such that $d_i=\diam D_i\to d_m(X)$. Then the graph $G_{d_i}$ contains an $m$-clique subgraph such that each $D_i$ lies in some its clique. The latter implies that $\dl_m(X)\le d_m(X)$.

Conversely, let $\dl_i$ be a decreasing sequence such that $\dl_i\to\dl_m(X)$. By $G_i$ we denote some $m$-clique subgraph of $G_{\dl_i}$. Let $H_i$ be the subgraph in $G_i$ that is equal to the disjoint union of $m$ cliques. Denote by $X_i^p$ the vertex sets of the cliques of the graph $H_i$, then $\diam X_i^p\le\dl_i$. Put $D_i=\{X_i^p\}$, and we get $D_i\in\cD_m(X)$ and $\diam D_i\le\dl_i$, so $d_m(X)\le\dl_m(X)$.
\end{proof}

\begin{thm}\label{thm:small1}
Let $X$ be a compact metric space, $\s(X)=\{\s_1,\s_2,\ldots\}$ be the $\mst$-spectrum of $X$, and $\l<\diam X+\s_k$. Assume that for some $k\in\N$, $k+1\le\#X$, the equality $d_{k+1}(X)=\diam X$ is valid. Then $2d_{GH}(\l\D_{k+1},X)=\diam X$.
\end{thm}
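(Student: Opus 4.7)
The plan is to apply Corollary~\ref{cor:GH-dist-alpha-beta} with $m=k+1$ and reduce the claim to the statement
$$
\inf_{D\in\cD_{k+1}(X)}\max\bigl(\diam D,\,\l-\a(D),\,\b(D)-\l\bigr)=\diam X.
$$
The two hypotheses will play complementary roles: $d_{k+1}(X)=\diam X$ will supply the lower bound, while $\l<\diam X+\s_k$ will be used to produce a witnessing partition for the upper bound.

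For the lower bound I would first note that $\diam X_i\le\diam X$ for any $X_i\ss X$, hence $\diam D\le\diam X$ for every $D\in\cD_{k+1}(X)$. Combined with the hypothesis $d_{k+1}(X)=\diam X$, this forces the stronger conclusion that $\diam D=\diam X$ for \emph{every} admissible partition. Consequently the triple maximum is at least $\diam X$ for every $D$, and so is the infimum.

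For the upper bound I would look for a partition $D$ satisfying both $\l-\a(D)\le\diam X$ and $\b(D)-\l\le\diam X$; the maximum then collapses to $\diam D=\diam X$ and the infimum is realised. The second inequality is automatic from $\b(D)\le\diam X$ and $\l>0$. The first rewrites as $\a(D)\ge\l-\diam X$, and splits into two cases. If $\l\le\diam X$, any $D\in\cD_{k+1}(X)$ works (such $D$ exists since $k+1\le\#X$). If $\l>\diam X$, the hypothesis becomes $0<\l-\diam X<\s_k$, and the definition of $\s_k$ as $\sup\{\a(D):D\in\cD_{k+1}(X)\}$ yields a partition with $\a(D)>\l-\diam X$.

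The only delicate point is this last existence claim: since $\s_k$ is a supremum rather than a maximum in the compact (as opposed to finite) setting, one must use the strict inequality $\l-\diam X<\s_k$ to extract a genuine partition realising $\a(D)>\l-\diam X$. Everything else is direct bookkeeping with the formula of Corollary~\ref{cor:GH-dist-alpha-beta}.
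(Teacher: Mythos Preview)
Your argument is correct and follows essentially the same route as the paper's own proof: both reduce to the formula $\dis R_D=\max\{\diam D,\l-\a(D),\b(D)-\l\}$, obtain the upper bound by choosing $D$ with $\a(D)>\l-\diam X$ (using $\l<\diam X+\s_k$ and the definition of $\s_k$ as a supremum), and obtain the lower bound from $d_{k+1}(X)=\diam X$. The only cosmetic difference is that for the lower bound the paper invokes the existence of an optimal $R_D\in\cR^0_{\opt}(\l\D_{k+1},X)$ and reads off $\dis R_D\ge\diam D\ge d_{k+1}(X)=\diam X$, whereas you argue directly that $d_{k+1}(X)=\diam X$ together with $\diam D\le\diam X$ forces $\diam D=\diam X$ for every partition, which is slightly cleaner and avoids appealing to compactness a second time.
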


\begin{proof}
Put $m=k+1$. Recall that, in accordance to Proposition~\ref{prop:disRD}, for any $D\in\cD_m(X)$ the relation
$$
\dis R_D=\max\{\diam D,\,\l-\a(D),\,\b(D)-\l\}
$$
is valid. Besides, $\diam D\le\diam X$ and $\b(D)-\l\le\diam X-\l<\diam X$. Since $\l<\diam X+\s_k$, then there exists  $D\in\cD_m(X)$ such that $\l-\diam X<\a(D)$, therefore, for such $D$ we have $\l-\a(D)<\diam X$. Gathering all those inequalities, we conclude that for such $D$ the inequality $\dis R_D\le\diam X$ holds. Thus, due to Proposition~\ref{prop:GH-dist-RD}, we have $2d_{GH}(\l\D_m,X)\le\diam X$.

On the other hand, since $d_m(X)=\diam X$, then for any $R_D\in\cR^0_{\opt}(\l\D_m,X)$ we have
$$
\diam X\ge2d_{GH}(\l\D_m,X)=\dis R_D\ge\diam D\ge d_m(X)=\diam X,
$$
so $2d_{GH}(\l\D_m,X)=\diam X$.
\end{proof}

\begin{thm}\label{thm:small2}
Let $X$ be a compact metric space. Then for any $m\in\N$, $m\le\#X$, and any $0<\l\le(\diam X)/2$ the equality $2d_{GH}(\l\D_m,X)=\max\{d_m(X),\diam X-\l\}$ holds.
\end{thm}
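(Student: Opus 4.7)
The plan is to combine Proposition~\ref{prop:disRD} (the explicit formula for $\dis R_D$), Proposition~\ref{prop:GH-dist-RD} (expressing the Gromov--Hausdorff distance as the infimum of $\dis R_D$ over partitions into $m$ pieces), and the diameter lower bound from Proposition~\ref{prop:GH_simple}(\ref{prop:GH_simple:2}). The hypothesis $\l\le(\diam X)/2$ is used crucially to dominate $\l$ by $\diam X-\l$, which in turn controls the terms $\l-\a(D)$ and $\b(D)-\l$ uniformly in $D$.

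For the upper bound, I would fix an arbitrary $D\in\cD_m(X)$ and bound the three terms in Proposition~\ref{prop:disRD} separately. Since $\a(D)\ge0$, we have $\l-\a(D)\le\l\le\diam X-\l$; since $\b(D)\le\diam X$, we have $\b(D)-\l\le\diam X-\l$. Therefore
$$
\dis R_D\le\max\{\diam D,\,\diam X-\l\}.
$$
Taking the infimum over $D\in\cD_m(X)$, applying Proposition~\ref{prop:GH-dist-RD}, and using the elementary identity $\inf_D\max\{f(D),c\}=\max\{\inf_D f(D),c\}$ for a constant $c$, I would obtain
$$
2d_{GH}(\l\D_m,X)\le\max\{d_m(X),\,\diam X-\l\}.
$$

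For the lower bound, two separate inequalities suffice. First, Proposition~\ref{prop:GH_simple}(\ref{prop:GH_simple:2}) applied to the simplex $\l\D_m$ of diameter $\l\le\diam X$ gives
$$
2d_{GH}(\l\D_m,X)\ge|\diam X-\l|=\diam X-\l.
$$
Second, since $\dis R_D\ge\diam D$ by Proposition~\ref{prop:disRD}, Proposition~\ref{prop:GH-dist-RD} yields
$$
2d_{GH}(\l\D_m,X)=\inf_{D\in\cD_m(X)}\dis R_D\ge\inf_{D\in\cD_m(X)}\diam D=d_m(X).
$$
Combining the two lower bounds with the upper bound establishes the claimed equality.

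There is no serious obstacle: the argument is a direct consequence of the machinery assembled in the preceding sections, and the compactness of $X$ (hence finiteness of $\diam X$ and of $d_m(X)\le\diam X$) ensures that every quantity is well defined. The only real content of the hypothesis $\l\le(\diam X)/2$ is that it removes the $\l-\a(D)$ term from the upper-bound calculation; without it one would need a finer analysis of partitions that separate the diameter-realizing pair, which is precisely what Theorem~\ref{thm:spec} and Theorem~\ref{thm:small1} handle in complementary regimes.
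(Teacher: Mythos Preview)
Your argument is correct and in fact cleaner than the paper's own proof. The paper proceeds by a case distinction on whether $d_m(X)=\diam X$; in the nontrivial case $d_m(X)<\diam X$ it invokes Theorem~\ref{thm:m_less_than_n_corresp} to pick an optimal partition $D$, observes that $\diam D<\diam X$ forces $\b(D)=\diam X$ exactly, deduces $\b(D)-\l\ge\l-\a(D)$, and then splits further according to whether $\diam D\le\diam X-\l$, finishing the last subcase with a limiting sequence $D_i$ with $\diam D_i\to d_m(X)$. Your route bypasses all of this: you use only the crude uniform bounds $\l-\a(D)\le\l\le\diam X-\l$ and $\b(D)-\l\le\diam X-\l$, together with the elementary identity $\inf_D\max\{f(D),c\}=\max\{\inf_D f(D),c\}$, and you obtain the lower bound from Proposition~\ref{prop:GH_simple}(\ref{prop:GH_simple:2}) rather than from the structure of an optimal $R_D$. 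The net effect is that you never need the existence of an optimal correspondence, nor the observation that $\b(D)=\diam X$ when $\diam D<\diam X$; the paper's extra machinery buys nothing here. One tiny cosmetic point: your invocation of Proposition~\ref{prop:GH_simple}(\ref{prop:GH_simple:2}) tacitly assumes $\diam(\l\D_m)=\l$, which needs $m\ge2$; for $m=1$ the simplex has diameter $0$ and the bound is even stronger, so the conclusion is unaffected.
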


\begin{proof}
Choose an arbitrary partition $D=\{X_i\}\in\cD_m(X)$. Due to proposition~\ref{prop:disRD}, we have
$$
\dis R_D=\max\{\diam D,\,\l-\a(D),\,\b(D)-\l\}.
$$
Notice that $\diam D\le\diam X$, $\l-\a(D)\le\l<\diam X$, and $\b(D)-\l\le\diam X-\l<\diam X$, therefore, $\dis R_D\le\diam X$, and if $\diam D<\diam X$, then $\dis R_D<\diam X$. In particular, $2d_{GH}(\l\D_m,X)<\diam X$ in this case.

Let $d_m(X)=\diam X$, then
$$
\diam X\ge\dis R_D\ge\diam D\ge d_m(X)=\diam X,
$$
hence, $\dis R_D=\diam X$ for all $D\in\cD_m$, and, due to Proposition~\ref{prop:GH-dist-RD}, we have
$$
2d_{GH}(\l\D_m,X)=\dis R_D=\diam X=\max\bigl\{d_m(X),\diam X-\l\bigr\}.
$$

Now, let  $d_m(X)<\diam X$. Then, in accordance to the definition of infimum, there exists $D'\in\cD_m(X)$ such that  $\diam D'<\diam X$. As we have already mentioned above, the latter implies that $\dis R_{D'}<\diam X$, and hence, $2d_{GH}(\l\D_m,X)<\diam X$. On the other hand, due to Theorem~\ref{thm:m_less_than_n_corresp}, there exists a $D\in\cD_m(X)$ such that $R_D\in\cR^0_{\opt}(\l\D_m,X)$. But then $\dis R_D\le\dis R_{D'}<\diam X$ and $\diam D<\diam X$.

Also, notice that the inequality $\diam D<\diam X$ implies the equality $\b(D)=\diam X$. Besides, since
$$
\b(D)+\a(D)=\diam X +\a(D)\ge\diam X
$$
in this case, then the assumption $\l\le(\diam X)/2$ implies that  $\b(D)+\a(D)\ge 2\l$, and hence, $\b(D)-\l\ge\l-\a(D)$ and
$$
\dis R_D=\max\{\diam D,\,\b(D)-\l\}.
$$

Thus, $D\in\cD_m(X)$, $R_D\in\cR^0_{\opt}(\l\D_m,X)$, and $\diam D<\diam X$. So, if $\diam D\le\diam X-\l$, then $\dis R_D=\max\{\diam D,\,\diam X-\l\}=\diam X-\l=\max\bigl\{d_m(X),\,\diam X-\l\bigr\}$.

If $\diam D\ge\diam X-\l$, then
$$
\dis R_D=\max\{\diam D,\,\diam X-\l\}=\diam D\ge\max\{d_m(X),\,\diam X-\l\}.
$$
Let us prove the inverse inequality. To do that, consider a sequence $D_i\in\cD_m(X)$ such that $\diam X>\diam D_i\to d_m(X)$. For each $i$ we have $\b(D_i)=\diam X$ and $\l-\a(D_i)\le\b(D_i)-\l$, and hence,
$$
\dis R_{D_i}=\max\{\diam D_i,\diam X-\l\}\to\max\bigl\{d_m(X),\diam X-\l\bigr\}.
$$
But $R_D\in\cR^0_{\opt}(\l\D_m,X)$, therefore, $\dis R_D\le \dis R_{D_i}$ for any $i$. Passing to the limit, we conclude that $\dis R_D\le \max\bigl\{d_m(X),\,\diam X-\l\bigr\}$, q.e.d.
\end{proof}

Now we apply our technique to calculate the distances between a finite $n$-point metric space and the simplex $t\D_{n-1}$.

\begin{thm}\label{thm:minus_one}
Let  $X$ be a finite metric space, $\#X=n\ge2$, $\l>0$. Then
$$
2d_{GH}(\l\D_{n-1},X)=\max\{\s_{n-1}, \l-\s_{n-2},\S_{n-1}-\l\}.
$$
Moreover, the correspondence  that takes one of the simplex's elements to a pair of the closest points from $X$, and that is one-to-one on the remaining elements, is optimal.
\end{thm}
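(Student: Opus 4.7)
The strategy is to apply Proposition~\ref{prop:GH-dist-RD}, which for $m=n-1\le\#X$ reduces the computation of $2d_{GH}(\l\D_{n-1},X)$ to minimizing $\dis R_D$ over partitions $D\in\cD_{n-1}(X)$. Since $\#X=n$ and $|D|=n-1$, a pigeonhole argument forces every such partition to consist of exactly one two-element block $\{x,y\}$ together with $n-2$ singletons. Proposition~\ref{prop:disRD} then reads
$$
\dis R_D=\max\bigl\{|xy|,\,\l-\a(D),\,\b(D)-\l\bigr\},
$$
where $\a(D)$ and $\b(D)$ are respectively the minimum and the maximum of $|pq|$ over all pairs $\{p,q\}\ne\{x,y\}$ in $X$. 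The degenerate case $n=2$ is handled directly by Proposition~\ref{prop:GH_simple}, so I assume $n\ge3$ below.

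For the lower bound I bound each of the three terms from below, uniformly in $D$. First, $|xy|\ge\e(X)=\s_{n-1}$. Second, by the very definition of the $\mst$-spectrum, $\a(D)\le\sup_{D'\in\cD_{n-1}(X)}\a(D')=\s_{n-2}$, hence $\l-\a(D)\ge\l-\s_{n-2}$. Third, if $\{x,y\}$ is not a diameter pair of $X$, then some diameter pair lies across distinct blocks of $D$, so $\b(D)=\diam X=\S_{n-1}$ and $\b(D)-\l=\S_{n-1}-\l$; if instead $\{x,y\}$ realizes the diameter, then $|xy|=\S_{n-1}\ge\S_{n-1}-\l$ already. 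In either case $\dis R_D\ge\max\{\s_{n-1},\,\l-\s_{n-2},\,\S_{n-1}-\l\}$.

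For the matching upper bound I exhibit an explicit $D^*$. Choose $\{x^*,y^*\}$ to be any minimum-distance pair in $X$, so $|x^*y^*|=\s_{n-1}$, and let $D^*\in\cD_{n-1}(X)$ be the corresponding partition. The delicate point is to verify that $\a(D^*)=\s_{n-2}$: if $\{x^*,y^*\}$ is the unique minimum-distance pair, then for every other partition $D'$ the pair $\{x^*,y^*\}$ lies across blocks of $D'$, forcing $\a(D')\le\s_{n-1}$, whereas $\a(D^*)$ equals the second-smallest distance of $X$, which is strictly larger, so $D^*$ realizes the supremum. If instead several minimum-distance pairs exist, then every partition has some minimum pair across blocks, so $\a(D')=\s_{n-1}$ for all $D'$, and in particular $\a(D^*)=\s_{n-1}=\s_{n-2}$. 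Since $\b(D^*)\le\diam X=\S_{n-1}$ automatically, Proposition~\ref{prop:disRD} gives $\dis R_{D^*}\le\max\{\s_{n-1},\,\l-\s_{n-2},\,\S_{n-1}-\l\}$. Combining this with the lower bound and Proposition~\ref{prop:GH-dist-RD} yields the claimed formula, and $R_{D^*}$ is precisely the optimal correspondence described in the statement.

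The main obstacle is the case analysis establishing that a minimum-distance pair of $X$ actually attains the supremum defining $\s_{n-2}$; once that is in hand, everything else reduces to the formula for $\dis R_D$ together with the immediate identifications $\s_{n-1}=\e(X)$ and $\S_{n-1}=\diam X$.
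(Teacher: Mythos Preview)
Your proof is correct and follows essentially the same route as the paper's: reduce via Proposition~\ref{prop:GH-dist-RD} to partitions $D\in\cD_{n-1}(X)$, observe each such $D$ has a single two-point block $\{x,y\}$, and identify the partition where $\{x,y\}$ is a closest pair as the optimal one. The only organizational difference is that the paper computes $\dis R_D$ explicitly in the three cases $|x_ix_j|=\s_{n-1}$, $|x_ix_j|=\S_{n-1}$, and ``otherwise'' (using that the sorted nonzero distances begin with $\s_{n-1},\s_{n-2}$ and end with $\S_{n-2},\S_{n-1}$) and then compares, whereas you give a uniform lower bound and a matching upper bound at $D^*$; your case analysis verifying $\a(D^*)=\s_{n-2}$ is in fact a more careful justification of a step the paper leaves implicit.
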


\begin{proof}
Apply Corollary~\ref{cor:GH-dist-alpha-beta} again and conclude that the doubled distance sought for is equal to the minimum of distortions of the correspondences $R_D$ generated by partitions $D$ of the space $X$ into $n-1$ subsets. Since  $\#X=n$, then all such partitions have the following form: $n-2$ elements of a partition are single points, and one element of the partition consists of two points. Let the two-point element be equal to $\{x_i,x_j\}$. Then
$$
\dis R_D=\max\big\{|x_ix_j|,\,\l-\a(D),\,\b(D)-\l\big\},
$$
$\a(D)=\min\big\{|x_px_q|:p\ne q,\,\{p,q\}\ne\{i,j\}\big\}$, and $\b(D)=\max\big\{|x_px_q|:p\ne q,\,\{p,q\}\ne\{i,j\}\big\}$, i.e., the minimum and the maximum are taken over all the distances from $X$, except the single one. Notice that if we write down  all nonzero distances in a finite metric space in the increasing order, then the resulting sequence has the form $\{\s_{n-1},\s_{n-2},\ldots,\S_{n-2},\S_{n-1}\}$. Therefore,
$$
\dis R_D=\begin{cases}
\max\{\s_{n-1},\,\l-\s_{n-2},\,\S_{n-1}-\l\}, &\text{if $|x_ix_j|=\s_{n-1}$},\\
\max\{\S_{n-1},\,\l-\s_{n-1},\,\S_{n-2}-\l\}, &\text{if $|x_ix_j|=\S_{n-1}$},\\
\max\big\{|x_ix_j|,\,\l-\s_{n-1},\,\S_{n-1}-\l\big\}, &\text{in the remaining cases}.
\end{cases}
$$
Since $\S_{n-1}$ is the maximal distance in $X$, and $\s_{n-1}$ is the minimal one, then
$$
\max\{\S_{n-1},\,\l-\s_{n-1},\,\S_{n-2}-\l\}=\max\{\S_{n-1},\,\l-\s_{n-1}\}\ge \max\{\s_{n-1},\, \l-\s_{n-2},\,\S_{n-1}-\l\}.
$$
Further, $\max\big\{|x_ix_j|,\,\l-\s_{n-1},\,\S_{n-1}-\l\big\}\ge\max\{\s_{n-1},\,\l-\s_{n-2},\,\S_{n-1}-\l\big\}$ because $\s_{n-1}\le |x_ix_j|$ for all $i$ and $j$. So,
$$
2d_{GH}(\l\D_{n-1},X)=\min_D\dis R_D=\max\{\s_{n-1},\,\l-\s_{n-2},\,\S_{n-1}-\l\}.
$$
Moreover, the right hand side of the latter equality is the distortion of the correspondence which takes one of the simplex's elements to a pair of the closest elements from the space $X$, and that is one-to-one on the remaining parts of the spaces.
\end{proof}

\subsection{Examples}
However, in general case calculation of the Gromov--Hausdorff distance to a simplex remains a difficult problem. Here we discuss some examples of the Gromov--Hausdorff distance calculation between the simplexes  $t\D_2$ and a four-point metric space $X=\{x_1,x_2,x_3,x_4\}$ with a distance matrix
$$
\left(\begin{array}{cccc}
0 & a & b & d \\
a & 0 & c & e \\
b & c & 0 & f \\
d & e & f & 0
\end{array}\right).
$$

In accordance to Theorem~\ref{thm:m_less_than_n_corresp} and Corollary~\ref{cor:GH-dist-alpha-beta}, to calculate the distance it suffices to consider only irreducible correspondences $R_D\in\cR(t\D_2,X)$ generated by partitions $D=\{X_1,X_2\}$ of the set $X$ into nonempty subsets. Evidently, there are seven such partitions, namely, four into a single element subset and a three-element subset, and three into a pair of two-element subsets. If such a partition is fixed, then in accordance to Proposition~\ref{prop:disRD} its distortion has the following form:
$$
\dis R_D=\max\big\{\diam D,\,t-\a(D),\,\b(D)-t\big\}.
$$
If $X_1=\{x_1\}$, $X_2=\{x_2,x_3,x_4\}$, then
$$
\dis R_D=\max\big\{\max\{c,e,f\},\,t-\min\{a,b,d\},\,\max\{a,b,d\}-t\big\},
$$
and if  $X_1=\{x_1,x_2\}$, $X_2=\{x_3,x_4\}$, then
$$
\dis R_D=\max\big\{\max\{a,f\},\,t-\min\{b,c,d,e\},\,\max\{b,c,d,e\}-t\big\}.
$$
To obtain an answer in more concrete form, we need to make some assumptions concerning the distances in $X$.

\subsubsection{Distance function is cumbersome, but can be expressed in terms of the spectra}
Assume that the distances are ordered as follows: $a<e<b<c<f<d$.  In this case the spectra are $\s=\{b,e,a\}$ and $\S=\{c,f,d\}$.  In accordance to the above reasoning, the distance is calculated as the following minimum of seven maxima:
\begin{multline}\label{mult:1}
2d_{GH}(t\D_2,X)=\min\Big\{
\max\big\{f,t-a,d-t\big\},
\max\big\{d,t-a,c-t\big\},
\max\big\{d,t-b,f-t\big\}, \\
\max\big\{c,t-e,d-t\big\},
\max\big\{f,t-e,d-t\big\},
\max\big\{b,t-a,d-t\big\},
\max\big\{d,t-a,f-t\big\}
\Big\}.
\end{multline}
Consider the third, fourth, and sixth maxima in the right hand part of Formula~(\ref{mult:1}), and notice that the remaining maxima are greater than or equal to one of these three. This can be verified directly. For example, $\max\big\{d,t-a,c-t\big\}\ge\max\big\{b,t-a,d-t\big\}$, because $d\ge b$ and $d\ge d-t$. Thus,
\begin{equation}\label{mult:1.1}
2d_{GH}(t\D_2,X)=\min\Big\{
\max\big\{d,t-b,f-t\big\}, \\
\max\big\{c,t-e,d-t\big\},
\max\big\{b,t-a,d-t\big\}
\Big\}.
\end{equation}

\begin{lem}
For $t\in[0,a+c]$ we have $2d_{GH}(t\D_2,X)=\max\big\{b,t-a,d-t\big\}$.
\end{lem}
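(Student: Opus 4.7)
The strategy is to invoke formula~(\ref{mult:1.1}) directly and show that on the interval $t\in[0,a+c]$ the third of the three maxima is dominated by the other two, so that the minimum is realized by
$$
M_3 := \max\big\{b,\,t-a,\,d-t\big\}.
$$

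Concretely, the plan is to check two termwise inequalities, using only the assumed ordering $a<e<b<c<f<d$ and the hypothesis $0\le t\le a+c$.

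For $M_3\le M_1=\max\{d,\,t-b,\,f-t\}$: the entry $b$ on the left is bounded by $d$ on the right since $b<d$; the entry $t-a$ satisfies $t-a\le c<d$ because $t\le a+c$, so it too is dominated by $d$; finally $d-t\le d$ whenever $t\ge0$. Hence $M_3\le M_1$ on the whole interval.

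For $M_3\le M_2=\max\{c,\,t-e,\,d-t\}$: the entry $b$ is bounded by $c$ since $b<c$; the entry $t-a$ is bounded by $c$ precisely because $t\le a+c$; and $d-t$ appears identically on both sides. Hence $M_3\le M_2$ as well.

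Combining these two inequalities with~(\ref{mult:1.1}) gives $2d_{GH}(t\D_2,X)=M_3$, which is exactly the claim. There is essentially no obstacle here beyond careful bookkeeping: the interval endpoint $a+c$ is chosen precisely so that the bound $t-a\le c$ goes through when comparing to $M_2$, and once that comparison is settled, the comparison with $M_1$ is automatic from $c<d$ and the ordering of the remaining entries.
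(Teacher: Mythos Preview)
Your proof is correct and is essentially identical to the paper's own argument: both show that on $[0,a+c]$ each term of $M_3$ is dominated by some term of $M_1$ (using $b<d$, $t-a\le c<d$, $d-t\le d$) and likewise by some term of $M_2$ (using $b<c$, $t-a\le c$, and the shared entry $d-t$), whence the minimum in~(\ref{mult:1.1}) is $M_3$.
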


\begin{proof}
It suffices to verify that in the chosen segment the first two maxima in Formula~(\ref{mult:1.1}) are not less than the third one.

To compare the first and the third maxima, notice that: $b<d$ for all $t$; since $t\le a+c$, then $t-a\le c<d$; and, at last, $d-t\le d$.

To compare the second and the third maxima, notice that: $b<c$ for all $t$; since $t\le a+c$, then $t-a\le c$.
\end{proof}

\begin{lem}
For $t\in[a+c,e+d]$ we have $2d_{GH}(t\D_2,X)=\max\big\{c,t-e,d-t\big\}$.
\end{lem}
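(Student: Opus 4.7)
The plan is to invoke Formula~(\ref{mult:1.1}) and show that on $[a+c,e+d]$ the middle maximum $\max\{c,t-e,d-t\}$ is bounded above by each of the other two, so that it realizes the minimum of the three. This reduces the lemma to two elementary interval checks based on the ordering $a<e<b<c<f<d$.

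First I would compare $\max\{c,t-e,d-t\}$ with the first maximum $\max\{d,t-b,f-t\}$. Since $d$ is the largest distance in $X$, the single term $d$ already dominates every entry on the left: $d\ge c$ directly; the hypothesis $t\le e+d$ yields $d\ge t-e$; and $d\ge d-t$ for $t\ge 0$. Hence $\max\{d,t-b,f-t\}\ge d\ge\max\{c,t-e,d-t\}$ on the whole interval (in fact on $[0,e+d]$).

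Next I would compare $\max\{c,t-e,d-t\}$ with the third maximum $\max\{b,t-a,d-t\}$. The entry $d-t$ is common to both, so it suffices to show that $t-a$ dominates both $c$ and $t-e$. The assumption $t\ge a+c$ gives $t-a\ge c$, and $a<e$ gives $t-a>t-e$. Consequently $\max\{b,t-a,d-t\}\ge\max\{t-a,d-t\}\ge\max\{c,t-e,d-t\}$.

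Combining these two comparisons, the right-hand side of Formula~(\ref{mult:1.1}) collapses to $\max\{c,t-e,d-t\}$ on the stated interval, which is the claimed value of $2d_{GH}(t\D_2,X)$. I do not foresee any real obstacle: both comparisons are routine given the ordering of the six distances, and the interval endpoints $a+c$ and $e+d$ are chosen precisely so that the dominating inequalities $t-a\ge c$ and $d\ge t-e$ both remain valid.
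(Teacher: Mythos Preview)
Your proof is correct and follows essentially the same argument as the paper: you invoke Formula~(\ref{mult:1.1}) and bound the middle maximum by the other two using exactly the inequalities $c<d$, $t\le e+d\Rightarrow t-e\le d$, $d-t\le d$ for the first comparison, and $t\ge a+c\Rightarrow t-a\ge c$, $a<e\Rightarrow t-a>t-e$ (with $d-t$ common) for the third. The only difference is cosmetic---you make the common $d-t$ term explicit, whereas the paper leaves it implicit.
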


\begin{proof}
It suffices to verify that in the chosen segment the first and the third maxima in Formula~(\ref{mult:1.1}) are not less than the second one.

To compare the first and the second maxima, notice that: $c<d$ for all $t$; since $t\le e+d$, then $t-e<d$; and, at last, $d-t\le d$.

To compare the third and the second maxima, notice that: $t-a>t-e$ for all $t$; since $t\ge a+c$, then $t-a\ge c$.
\end{proof}

\begin{lem}
For $t\in[e+d,\infty]$ we have $2d_{GH}(t\D_2,X)=\max\big\{d,t-b,f-t\big\}$.
\end{lem}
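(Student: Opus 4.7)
The plan is to follow the template of the two preceding lemmas: apply Formula~(\ref{mult:1.1}) and show that on $[e+d,\infty]$ the first of the three maxima inside the min, namely $M_1 := \max\bigl\{d,\,t-b,\,f-t\bigr\}$, is dominated by each of the other two, so that the infimum is realized by $M_1$. Thus I would need to verify $M_2 \ge M_1$ and $M_3 \ge M_1$, where $M_2 := \max\bigl\{c,\,t-e,\,d-t\bigr\}$ and $M_3 := \max\bigl\{b,\,t-a,\,d-t\bigr\}$.

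For $M_2 \ge M_1$ I would single out the term $t-e$ in $M_2$. The hypothesis $t \ge e+d$ gives $t-e \ge d$; the ordering $e<b$ gives $t-e \ge t-b$; and $t \ge e+d \ge e+f$ (using $d \ge f$) rearranges to $2t \ge e+f$, i.e.\ $t-e \ge f-t$. So $t-e$ dominates each of the three entries of $M_1$, and hence $M_2 \ge t-e \ge M_1$.

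For $M_3 \ge M_1$ I would instead single out $t-a$. Since $a<e$ and $t \ge e+d$, $t-a > d$; since $a<b$, $t-a \ge t-b$; and $2t \ge 2(e+d) \ge a+f$ (using $a<e$ and $f<d$) gives $t-a \ge f-t$. Thus $t-a$ again dominates each entry of $M_1$, so $M_3 \ge t-a \ge M_1$.

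The argument is routine, so there is no genuine obstacle; the only place calling for a bit of care is handling the term $f-t$ in $M_1$, which becomes negative for $t$ large and must be compared via the inequality $2t \ge e+f$ or $2t \ge a+f$ rather than by naively subtracting $t$ from a known inequality on $f$. Once these comparisons are in place, Formula~(\ref{mult:1.1}) collapses to $2d_{GH}(t\D_2,X)=M_1=\max\bigl\{d,\,t-b,\,f-t\bigr\}$, completing the proof.
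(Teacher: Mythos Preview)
Your proposal is correct and follows essentially the same approach as the paper: both verify via elementary inequalities that on $[e+d,\infty]$ the second and third maxima in Formula~(\ref{mult:1.1}) dominate the first. The only cosmetic difference is that you show a single entry ($t-e$, resp.\ $t-a$) dominates all three entries of $M_1$, whereas the paper handles the $f-t$ term by the simpler comparison $d-t>f-t$; both routes are valid and equally routine.
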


\begin{proof}
It suffices to verify that in the chosen segment the second and the third maxima in Formula~(\ref{mult:1.1}) are not less than the first one.

To compare the second and the first maxima, notice that: $t-e>t-b$, and also $d-t>f-t$ for all $t$; since $t\ge e+d$, then $t-e\ge d$.

To compare the third and the first maxima, notice that: $t-a>t-b$, and also $d-t>f-t$ for all $t$; since $t\ge e+d$, then $t-a>t-e\ge d$.
\end{proof}

To visualize the obtained result let us fix the following values of the distances:  $a=3$, $b=4$, $c=5$, $d=6.5$, $e=3.5$, $f=6$ (the triangle inequalities can be verified directly). The graph of the function $f(t)=2d_{GH}(t\D_2,X)$ is depicted in Figure~\ref{fig:exam_1}.

\ig{exam_1}{0.33}{fig:exam_1}{Graph of the function $f(t)=2d_{GH}(t\D_2,X)$ for $a=3$, $e=3.5$, $b=4$, $c=5$,  $f=6$, $d=6.5$.}

Notice that in the case under consideration all the six distances belong to one of the two spectra. Therefore the function $f(t)$ can be expressed in terms of the spectra (in this case all the elements of the spectrum $\s$, and also $\S_1$ and $\S_3$, are used). But this is not always true.

\subsubsection{Distance function is easier, but can not be expressed in terms of spectra}
Assume now that the distances are ordered as follows: $a<b<c<d<e<f$.  In this case the spectra are $\s=\{d,b,a\}$ and $\S=\{d,e,f\}$.  The value $c$ does not belong to $\s$, because the corresponding edge forms a cycle together with the two edges of the least lengths. Now, the Gromov--Hausdorff distance between $X$ and the simplex $t\D_2$ has the form
\begin{multline}\label{mult:2}
2d_{GH}(t\D_2,X)=\min\Big\{
\max\big\{f,t-a,d-t\big\},
\max\big\{f,t-a,e-t\big\},
\max\big\{e,t-b,f-t\big\}, \\
\max\big\{c,t-d,f-t\big\},
\max\big\{f,t-b,e-t\big\},
\max\big\{e,t-a,f-t\big\},
\max\big\{d,t-a,f-t\big\}
\Big\}.
\end{multline}

Notice that all the maxima in the right hand part of Formula~(\ref{mult:2}) are not less than the forth maximum $\max\big\{c,t-d,f-t\big\}$, and hence
$$
2d_{GH}(t\D_2,X)=\max\big\{c,t-d,f-t\big\}.
$$

To visualize the result, we include the graph of the function $g(t)=2d_{GH}(t\D_2,X)$ for some specific values of the distances, see Figure~\ref{fig:exam_2}.

\ig{exam_2}{0.33}{fig:exam_2}{Graph of the function $g(t)=2d_{GH}(t\D_2,X)$ for $a=2$, $b=3$, $c=4$, $d=5$,  $e=6$, $f=7$.}

In spite of the fact that the function $g$ is simpler than $f$ from the first example, it can not be expressed in terms of the spectra, because $c$ does not belong to them. Notice also that in this case the value $c$ is equal to  $\dl_2(X)$ (i.e., to the $2$-clique number of $X$).

\subsubsection{Distances to the simplexes do not distinct non-isometric spaces}
It is well-known that the set of pairwise distances does not completely define the corresponding metric space. One of simple examples can be obtain as follows. On the set $\{x_1,x_2,x_3,x_4\}$ we consider the following two distance matrices that differ in the transposition of the distances $|x_1x_4|$ and $|x_3x_4|$:
$$
S_1=\left(\begin{array}{cccc}
0 & a & b & d \\
a & 0 & c & e \\
b & c & 0 & f \\
d & e & f & 0
\end{array}\right), \qquad
S_2=\left(\begin{array}{cccc}
0 & a & b & f \\
a & 0 & c & e \\
b & c & 0 & d \\
f & e & d & 0
\end{array}\right).
$$
Notice that if all the pairwise distances are close to each other, then the triangle inequalities are valid for the both matrices.

Assume that $a<b<c<d<f<e$, and let $X_i$ be the space with the distance matrix $S_i$, $i=1,\;2$. The sets of the pairwise distances of these spaces are the same, but the spaces are not isometric. For example, the unique minimum spanning tree in $X_1$ is the star--tree centered at the point $x_1$, and the unique minimum spanning tree in $X_2$ is the path $x_2x_1x_3x_4$. But the spectra $\s$, the maximum spanning trees, and the spectra $\S$ of these spaces are the same.

\begin{ass}\label{ass:examp_non_isom}
Under the above assumptions, the Gromov--Hausdorff distances from the spaces $X_i$ to any simplex are the same.
\end{ass}

\begin{proof}
Due to Theorem~\ref{thm:dist-n-simplex-bigger-dim}, the distances from $X_i$ to the simplexes consisting of five and more vertices are the same, because $\diam X_1=\diam X_2=e$. The distances to the simplexes $t\D_4$ are the same in accordance to Theorem~\ref{thm:dist-n-simplex-same-dim}, because the largest and the smallest distances in the spaces $X_i$ are the same. Further, the distances to the simplexes $t\D_3$ are the same in accordance to Theorem~\ref{thm:minus_one}, because the spaces $X_i$ have equal spectra $\s$ and $\S$. The distances to the single-point simplex are the same, due to Proposition~\ref{prop:GH_simple}, Item~(\ref{prop:GH_simple:1}), because the diameters of the spaces $X_i$ are the same.  It remains to calculate the distances to two-point simplexes. It can be done similarly to the above examples.

For the space $X_1$ the distance can be calculated as follows:
\begin{multline}\label{mult:3}
2d_{GH}(t\D_2,X_1)=\min\Big\{\max\big\{e,t-a,d-t\big\},
\max\big\{f,t-a,e-t\big\},
\max\big\{e,t-b,f-t\big\}, \\
\max\big\{c,t-d,e-t\big\},
\max\big\{f,t-b,e-t\big\},
\max\big\{e,t-a,f-t\big\},
\max\big\{d,t-a,e-t\big\}\Big\};
\end{multline}
and for the space $X_2$ the distance has the form
\begin{multline}\label{mult:4}
2d_{GH}(t\D_2,X_2)=\min\Big\{\max\big\{e,t-a,f-t\big\},
\max\big\{f,t-a,e-t\big\},
\max\big\{e,t-b,d-t\big\}, \\
\max\big\{c,t-d,e-t\big\},
\max\big\{d,t-b,e-t\big\},
\max\big\{e,t-a,f-t\big\},
\max\big\{f,t-a,e-t\big\}\Big\}.
\end{multline}

Notice that, due to the chosen order, all the maxima in the right hand parts of Formulas~(\ref{mult:3}) and~(\ref{mult:4}) are not less than the fourth maximum $\max\big\{c,t-d,e-t\big\}$, so
$$
2d_{GH}(t\D_2,X_1)=\max\big\{c,t-d,e-t\big\}=2d_{GH}(t\D_2,X_2).
$$
\end{proof}

\begin{cor}\label{cor:non_isom}
In the space of four-point metric spaces there exists an open subset $U$ such that for any $X\in U$ one can find a four-point metric space $X'$ that is not isometric to $X$, and such that $d_{GH}(X,t\D_m)=d_{GH}(X',t\D_m)$ for all $t>0$ and $m\in\N$.
\end{cor}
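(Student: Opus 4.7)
The plan is to upgrade the single example from Assertion~\ref{ass:examp_non_isom} to an open family by observing that every hypothesis in that assertion is an \emph{open} condition on the six pairwise distances. The proof of the assertion used only the strict ordering $a<b<c<d<f<e$ together with the triangle inequalities for both matrices $S_1$ and $S_2$, and each of these requirements is a strict linear inequality on the coordinates of the distance vector in $\R^6$.

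First I would fix a concrete six-tuple $(a_0,b_0,c_0,d_0,e_0,f_0)$ satisfying $a_0<b_0<c_0<d_0<f_0<e_0$ together with the triangle inequalities for both $S_1$ and $S_2$. Such tuples exist because, as noted immediately before Assertion~\ref{ass:examp_non_isom}, if all six values are sufficiently close to a common positive constant then all triangle inequalities hold automatically. Let $W\subset\R^6$ be the set of six-tuples $(a,b,c,d,e,f)$ simultaneously satisfying $a<b<c<d<f<e$ and all triangle inequalities for both $S_1$ and $S_2$; then $W$ is an open neighborhood of $(a_0,b_0,c_0,d_0,e_0,f_0)$ by construction.

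Next, I would define $U$ to be the image of $W$ in the space of isometry classes of four-point metric spaces (equipped with the Gromov--Hausdorff topology), where a tuple in $W$ is sent to the isometry class of the space $X$ with distance matrix $S_1$. Openness of $U$ reduces to openness of $W$: the strict ordering of the six distances rigidifies the vertex labeling up to the ambient $S_4$-action, so small Gromov--Hausdorff perturbations of $X$ correspond to small coordinate perturbations of $(a,b,c,d,e,f)$ preserving the ordering and the triangle inequalities, hence remain in $U$.

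Finally, for each $X\in U$ with parameters $(a,b,c,d,e,f)\in W$, define $X'$ to be the four-point space whose distance matrix is the corresponding $S_2$, obtained from $S_1$ by the transposition of $|x_1x_4|$ and $|x_3x_4|$. The non-isometry of $X$ and $X'$ follows from the discussion preceding Assertion~\ref{ass:examp_non_isom}: the unique minimum spanning tree of $X$ is the star centered at $x_1$, while that of $X'$ is the path $x_2x_1x_3x_4$. Applying Assertion~\ref{ass:examp_non_isom} pointwise on $W$ gives $d_{GH}(X,t\D_m)=d_{GH}(X',t\D_m)$ for all $t>0$ and $m\in\N$. The only mildly delicate step is verifying openness of $U$ in the Gromov--Hausdorff topology, but this is immediate once one notes that $W$ is cut out by strict inequalities and that the combinatorial type of the distance matrix is locally constant under such perturbations.
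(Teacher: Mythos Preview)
Your proposal is correct and is precisely the argument the paper intends: the corollary is stated without proof immediately after Assertion~\ref{ass:examp_non_isom}, and the implicit reasoning is exactly what you wrote out---the hypotheses of the assertion (the strict chain $a<b<c<d<f<e$ and the triangle inequalities for both $S_1$ and $S_2$) are open conditions in $\R^6$, so the construction works on an open set of parameters, and the distinct combinatorial types of the unique minimum spanning trees certify non-isometry throughout. Your only addition beyond the paper is the explicit remark that the image $U$ is open in the Gromov--Hausdorff topology because the quotient of an open set by the finite $S_4$-action remains open; this is indeed the one point that deserves a sentence, and your justification is adequate.
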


\begin{rk}
Notice that the distances to simplexes from the spaces $X_i$ do not depend on $f$, and so, we have found in fact an infinite (continuum) family of pairwise non-isometric finite metric spaces such that the distances to all simplexes are the same. A natural problem is to describe all such families of metric spaces. In particular, to find out if it is possible to construct similar examples for infinite metric spaces.
\end{rk}

\end{document}